\newtheorem{thm}{Theorem}[section]
\newtheorem{theorem}[thm]{Theorem}
\newtheorem{corollary}[thm]{Corollary}
\newtheorem{lemma}[thm]{Lemma}
\newtheorem{proposition}[thm]{Proposition}
\newtheorem{prop}[thm]{Proposition}
\newtheorem{thm-dfn}[thm]{Theorem-Definition}
\newtheorem{cor}[thm]{Corollary}
\theoremstyle{definition}
\newtheorem{defn}{Definition}[section]
\theoremstyle{remark}
\newtheorem{remark}{Remark}[section]
\newtheorem{example}[remark]{Example}
\newtheorem{rmk}[remark]{Remark}
\numberwithin{equation}{section}
\newcommand{\qbin}[3]{\genfrac{[}{]}{0pt}{}{#1}{#2}_{#3}}
\newcommand{\fg}{{\mathfrak g}}
\newcommand{\fa}{{\mathfrak a}}
\newcommand{\fF}{{\mathfrak{F}}}
\newcommand{\Lg}{{\mathfrak g}}
\newcommand{\bC}{{\mathbb C}}
\newcommand{\bZ}{{\mathbb Z}}
\newcommand{\bP}{{\mathbb P}}
\newcommand{\mF}{\mathcal{F}}
\newcommand{\mO}{\mathcal{O}}
\newcommand{\mH}{\mathcal{H}}
\newcommand{\calF}{{\mathcal F}}
\newcommand{\calL}{{\mathcal L}}
\newcommand{\calW}{{\mathcal W}}
\newcommand{\cW}{{\mathcal W}}
\newcommand{\cO}{{\mathcal O}}
\newcommand{\cF}{{\mathcal F}}
\newcommand{\cN}{{\mathcal N}}
\newcommand{\on}{\operatorname}
\newcommand{\ra}{\rightarrow}
\newcommand{\Loc}{\on{LocSys}}
\newcommand{\nc}{\newcommand}
\nc{\al}{{\alpha}} \nc{\be}{{\beta}} \nc{\ga}{{\gamma}}
\nc{\ve}{{\varepsilon}} \nc{\Ga}{{\Gamma}} 
\nc{\La}{{\Lambda}}
\nc{\ad }{{\on{ad }}}
\nc{\aff}{{\on{aff}}} \nc{\Aff}{{\mathbf{Aff}}}
\nc{\der}{{\on{der}}}
\nc{\diag}{{\on{diag}}}
\nc{\Fl}{{\calF\ell}}
\nc{\Hg}{{\on{Higgs}}}
\nc{\Id}{{\on{Id}}}
\nc{\Ind}{{\on{Ind}}}
\nc{\Op}{{\on{Op}}}
\nc{\res}{{\on{res}}}
\nc{\tr}{{\on{tr}}}
\nc{\GSp}{{\on{GSp}}} \nc{\GU}{{\on{GU}}} \nc{\SL}{{\on{SL}}}
\nc{\SU}{{\on{SU}}} \nc{\SO}{{\on{SO}}}
\nc{\nh}{{\Loc_{J^p}(\tau')}}
\nc{\bnh}{{\Loc_{\breve J^p}(\tau')}}
\nc{\bU}{{\overline{U}}} 
\nc{\IC}{{\on{IC}}}
\newcommand{\Jac}{\mathrm{Jac}}
\newcommand{\rSO}{\mathrm{SO}}
\newcommand{\rSL}{\mathrm{SL}}
\newcommand{\p}{\perp}
\nc{\ot}{\otimes}
\nc{\oh}{{\operatorname{H}}}
\nc{\gr}{{\operatorname{gr}}}
\nc{\rk}{{\operatorname{rank}}}
\nc{\codim}{{\operatorname{codim}}}
\nc{\img}{{\operatorname{Im}}}
\nc{\Span}{{\operatorname{Span}}}
\nc{\Img}{\operatorname{Im}}
\newcommand{\beqn}{\begin{equation*}}
\newcommand{\eeqn}{\end{equation*}}
\newcommand{\beq}{\begin{equation}}
\newcommand{\eeq}{\end{equation}}
\newcommand{\bern}{\begin{eqnarray*}}
\newcommand{\eern}{\end{eqnarray*}}
\nc{\Fano}[1]{\on{Fano}_{#1}}
\begin{document}
\title{on the cohomology of Fano varieties and the Springer correspondence}
        \author{Tsao-Hsien Chen}
        \address{Department of Mathematics, University of Chicago, Chicago, IL 60637, USA.}
        \email{chenth@math.uchicago.edu}
        \thanks{Tsao-Hsien Chen was supported in part by the AMS-Simons travel grant.}
        \author{Kari Vilonen}
        \address{School of Mathematics and Statistics, University of Melbourne, VIC 3010, Australia, and Department of Mathematics and Statistics, University of Helsinki, Helsinki, 00014, Finland}
         \email{kari.vilonen@unimelb.edu.au, kari.vilonen@helsinki.fi}
         \thanks{Kari Vilonen was supported in part by NSF grant DMS-1402928, the Academy of Finland,  the ARC grant DP150103525, the Humboldt Foundation, and the Simons Foundation.}
         \author{Ting Xue}
         \address{ School of Mathematics and Statistics, University of Melbourne, VIC 3010, Australia, and Department of Mathematics and Statistics, University of Helsinki, Helsinki, 00014, Finland}
         \email{ting.xue@unimelb.edu.au}
\thanks{Ting Xue was supported in part by the ARC grants DP150103525, DE160100975 and the Academy of Finland.}

\makeatletter
\let\@wraptoccontribs\wraptoccontribs
\makeatother
\contrib[with an appendix by]{Dennis Stanton}

\begin{abstract}
In this paper we compute the cohomology of the Fano varieties of $k$-planes in the smooth complete intersection of two quadrics in $\bP^{2g+1}$, using Springer theory for symmetric spaces.
\end{abstract}

\maketitle

\section{Introduction}

In this paper we compute the cohomology of the Fano varieties $\Fano k$ of $k$-planes  in the smooth complete intersection of two quadrics in $\bP^{2g+1}$, with $g\geq 1$. These Fano varieties have a concrete interpretation as moduli spaces of vector bundles (with extra structure) on a hyperelliptic curve $C$ of genus $g$. When $k=g-1$ then $\Fano {g-1}= \on{Jac}(C)$, the Jacobian of $C$ \cite{Re,Do} and when $k=g-2$ then $\Fano {g-2}= \on{Bun}_2(C)$, the moduli space of stable rank 2 vector bundles on $C$ with fixed odd determinant~\cite{DR}. For $k < g-2$ a more elaborated interpretation of the varieties $\Fano k$ as moduli spaces of bundles is given in~\cite{Ra}. The curve $C$ arises from the intersection of two quadrics in the following manner. If the intersection of the two quadrics is given by the pencil $\mu Q_1+\lambda Q_2$ then $C$ is the hyperelliptic curve over $\bP^1$ ramified at the points $[\mu,\lambda]$ where the quadric $\mu Q_1+\lambda Q_2$ becomes singular. 

The goal of this paper is to describe the cohomology of the varieties $\Fano k$ in general.  The form of our answer is in the spirit of the main theorem of~\cite{N} who, from our point of view, treats the case $\Fano {g-2}$. He makes use of the mapping class group which for us, as we work with hyperelliptic curves, is replaced by the fundamental group of the universal family of hyperelliptic curves. 

To state our result note that $\dim\Fano {g-i}=(g-i+1)(2i-1)$. We also  write
\beqn
\bar{H}^k(\Fano {g-i},\bC)=H^{\dim\Fano {g-i}-k}(\Fano {g-i},\bC),\ \ \bar\wedge^{k}\left(H^1(C,\bC)\right)=\wedge^{g-k}\left(H^1(C,\bC)\right).
\eeqn

\begin{thm}\label{thm-fano}
For $i\geq 2$, we have
\beqn
\bar{H}^k(\Fano {g-i},\bC)\cong\bigoplus_{j=i-1}^{g} N_i(k,j)\,\bar\wedge^{j}\left(H^1(C,\bC)\right),
\eeqn
where $N_i(k,j)$ is the coefficient of $q^k$ in 
\beqn
q^{-(j-i+1)(2i-1)}(1-q^{4j})\frac{\prod_{l=j-i+2}^{i+j-2}(1-q^{2l})}{\prod_{l=1}^{2i-2}(1-q^{2l})}.
\eeqn
In particular, the numbers $N_i(k,j)$ are independent of the genus $g$.
\end{thm}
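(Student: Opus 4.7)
The plan is to realize $\Fano{g-i}$ as a fiber in the geometry of Springer theory for a symmetric pair, use the Springer correspondence to decompose its cohomology into isotypic pieces under the natural monodromy action, and finally identify the graded multiplicities with the stated product formula.

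The first step is geometric: attach to the pencil $\mu Q_1 + \lambda Q_2$ a symmetric pair $(G, K)$, the natural candidate being $G = \SL_{2g+2}$ with involution giving $K = \SO_{2g+2}$, so that the $(-1)$-eigenspace $\frakg_1$ consists of traceless symmetric $(2g{+}2)\times(2g{+}2)$ matrices and the characteristic polynomial map $\frakg_1 / K \to \bC^{2g+1}$ has discriminant locus parametrizing the branch points of the double cover $C \to \bP^1$. Under this realization $\Fano{g-i}$ should appear as (a component of) a fiber of the analogue of the Grothendieck--Springer resolution $\widetilde{\frakg}_1 \to \frakg_1$, tied to a nilpotent $K$-orbit $\mathcal{O}_i$ whose partition type is controlled by the integer $i$ (and in turn controls the dimensions of isotropic subspaces common to all quadrics in the pencil).

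The second step is sheaf-theoretic: apply the Springer correspondence for symmetric spaces. Over the regular semisimple locus, the pushforward of the constant sheaf along the resolution decomposes into local systems indexed by characters of the monodromy group $\pi_1(\frakg_1^{\mathrm{rs}}/K)$. The two key identifications driving the whole argument are (i) this monodromy group is the fundamental group of the universal family of hyperelliptic curves, as foreshadowed in the introduction, and (ii) its natural $2g$-dimensional representation is $H^1(C, \bC)$. Consequently the exterior powers $\wedge^j H^1(C, \bC)$ label IC-summands of the Springer sheaf, and the usual stalk-versus-Springer-fiber identification yields
\beqn
H^\bullet(\Fano{g-i}, \bC) \cong \bigoplus_{j} P_{i,j}(q) \otimes \wedge^j H^1(C, \bC),
\eeqn
where $P_{i,j}(q) \in \bZ[q]$ is the graded multiplicity of $\wedge^j H^1(C, \bC)$ in the stalk of the Springer sheaf at a point of $\mathcal{O}_i$.

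The main remaining task, and the main obstacle, is to compute $P_{i,j}(q)$ explicitly and match it to the product formula of the theorem. This is a Green-function / Kostka--Foulkes-type calculation for the symmetric pair. The shape of
\beqn
q^{-(j-i+1)(2i-1)}(1-q^{4j})\frac{\prod_{l=j-i+2}^{i+j-2}(1-q^{2l})}{\prod_{l=1}^{2i-2}(1-q^{2l})}
\eeqn
— with matching numbers of factors in numerator and denominator and involving only even powers of $q$ — strongly suggests a Poincaré polynomial for a Grassmannian-type subvariety of the Springer fiber, twisted by a sign character; the systematic appearance of $q^2$ is characteristic of symmetric-space Springer theory. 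Verifying this identity is almost certainly the content of Stanton's appendix. Once $P_{i,j}(q)$ is pinned down, the renormalization $\bar H^k = H^{\dim - k}$ converts it to the coefficient $N_i(k,j)$, and independence of $g$ follows immediately: the orbit $\mathcal{O}_i$ and the irreducible character $\wedge^j$ live intrinsically on a transverse slice, while $g$ only controls the range of $j$'s that appear.
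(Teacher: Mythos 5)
Your high-level outline — symmetric pair $(\SL_{2g+2},\SO_{2g+2})$, monodromy of the universal family of hyperelliptic curves giving $H^1(C,\bC)$, a final combinatorial identity for the graded multiplicities handled by Stanton — correctly names the ingredients. But the mechanism you propose for connecting the cohomology of $\Fano{g-i}$ to Springer theory is off in a way that leaves a real gap.

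You describe $\Fano{g-i}$ as ``a fiber of the analogue of the Grothendieck--Springer resolution tied to a nilpotent $K$-orbit $\mathcal{O}_i$,'' and you then invoke ``the usual stalk-versus-Springer-fiber identification'' at a point of $\mathcal{O}_i$. In the paper the Fano variety is \emph{not} a Springer fiber over a nilpotent element. It is the fiber of the Hessenberg-type family $\check{\upsilon}_i$ over a \emph{regular semisimple} element $\gamma\in\frakg_1^{rs}$ (the pencil of quadrics determined by $Q$ and $Q_\gamma$). The fiber of the nilpotent resolution $\upsilon_i$ over $0\in\mathcal{N}_1$ is instead an orthogonal Grassmannian $\on{OGr}(i,2n)$, not a Fano variety. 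So taking stalks of a Springer sheaf at a nilpotent point does not produce $H^\bullet(\Fano{g-i})$.

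What bridges these two geometries — and what is absent from your argument — is the Fourier transform $\mathfrak{F}$ on $K$-equivariant perverse sheaves on $\frakg_1$. The paper shows $\mathfrak{F}(\check{\upsilon}_{i*}\bC[-])\cong \upsilon_{i*}\bC[-]$, so that the decomposition-theorem multiplicities $s^i_{jk}$ appearing in $\upsilon_{i*}\bC$ over nilpotent orbit closures are the \emph{same} numbers appearing in $\check{\upsilon}_{i*}\bC$ over $\frakg_1$, once one also establishes $\mathfrak{F}\bigl(\IC(\bar{\mathcal{O}}_{2^j1^{2n-2j}},\bC)\bigr)\cong \IC(\frakg_1,\mathcal{W}_j)$ (Proposition~\ref{prop-matching}). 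It is this last identification, not a stalk computation at a nilpotent, that lets one read off $H^\bullet(\Fano{g-i})$ as $\bigoplus_j s^i_{j,\cdot}\, W_j$ from the generic stalk of $\check{\upsilon}_{i*}\bC$. Without the Fourier transform your argument has no way to relate the multiplicities of IC sheaves on nilpotent orbit closures to the monodromy representations $\mathcal{W}_j$, nor to compute them via the Reeder resolution and the resulting identity \eqref{inductive eqn}. A secondary, smaller point: the local systems actually occurring are the primitive parts $\mathcal{W}_j$ of $\wedge^j H^1(C,\bC)$, and one must pass from the $W_j$ back to the $\bar{\wedge}^j$ by the telescoping $W_j = \wedge^j - \wedge^{j-2}$ before matching the product formula; your $P_{i,j}(q)$, as stated, conflates these two bases.
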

To come up with this formula we were inspired by the case of $\Fano {g-2}=\on{ Bun}_2(C)$ treated by Nelson \cite{N}. He, in turn, states that the formula in the case of $\Fano {g-2}= \on{Bun}_2(C)$  was conjectured by Donaldson. We guessed  the general formula above after reading \cite{Z} augmented by some experimentation. By our methods we reduced the proof of the theorem to a combinatorial identity which expresses the Poincare polynomial of the orthogonal Grassmannian in terms of the Poincare polynomials of ordinary Grassmannians in a particular manner, see formula~\eqref{inductive eqn}. This combinatorial identity was proved by Stanton. His proof is included in this paper as an appendix. We have not been able to understand this combinatorial identity from a geometric point of view. It would be interesting to have such a geometric interpretation. 

As a byproduct one obtains formulas for the Poincare polynomials (denoted by $h_j^{(n)}(q)$ in the text) of stalks of IC sheaves of certain nilpotent orbits in the symmetric space case. In the classical case these stalks are given by Kostka--Foulkes polynomials. Thus, the polynomials $h_j^{(n)}(q)$ can be regarded as symmetric space analogues of Kostka--Foulkes polynomials. The appendix of Stanton gives an explicit formula for the $h_j^{(n)}(q)$ and also proves some interesting recursive formulas for them.

The proof of the theorem makes use of Springer theory for symmetric spaces which was initiated in  \cite{CVX}. In particular, it can be seen as a concrete example of a general strategy to compute the cohomology of Hessenberg varieties \cite{GKM} ($\Fano {i}$ is a special case of a Hessenberg variety) using the Fourier transform. 
In \cite{CVX} we primarily worked with the symmetric pair $(\mathrm{SL}(N),\mathrm{SO}(N))$ in the case when $N$ is odd. As an application of our theory we computed the cohomology of Fano varieties of $k$-planes in the complete intersection of two quadrics in an even dimensional projective space~\cite[Theorem 8.1]{CVX}. In the even dimensional case the answer is considerably simpler than in the case of odd dimensional projective space we treat here. In the even dimensional case the cohomology turns out to be Hodge-Tate. 

We briefly explain the idea of the proof of the theorem: 

\noindent Step 1: 
Instead of computing the cohomology $H^k(\Fano {i},\bC)$ of a single Fano variety $\Fano {i}$,
we put $\Fano {i}$ into a proper smooth family 
$\pi:\widetilde{\on{Fano}}_{i}\ra\fg_1^{rs}$ and 
study the decomposition of the local system $R^k\pi_{*}\bC$ into irreducibles.
Here $\fg_1$ is the vector space 
of symmetric matrices in $\mathfrak{sl}({2n})$ ($n=g+1$)
and $\fg_1^{rs}$ is the open subset consisting of regular semisimple elements.

\noindent Step 2: We consider the intersection cohomology complex $\IC(\fg_1,R^k\pi_{*}\bC)$ associated to 
the local system $R^k\pi_{*}\bC$. Since $R^k\pi_{*}\bC$ is semisimple, the irreducible summands of $\IC(\fg_1,R^k\pi_{*}\bC)$ 
are in bijection with those of $R^k\pi_{*}\bC$. Hence it suffices to study the decomposition of 
\linebreak $\IC(\fg_1,R^k\pi_{*}\bC)$ into irreducibles.
The advantage of working with $\IC(\fg_1,R^k\pi_{*}\bC)$ 
is that we can make use of the powerful Fourier transform 
$\mathfrak{F}:\on{Perv}(\fg_1,\bC)\cong\on{Perv}(\fg_1,\bC)$. We show that 
$\mathfrak{F}\left(\IC(\fg_1,R^k\pi_{*}\bC)\right)$ is supported on the nilpotent cone $\mathcal N_1$
in $\fg_1$ and has the form 
\beq\label{iso}
\mathfrak{F}\left(\IC(\fg_1,R^k\pi_{*}\bC)\right)\cong\bigoplus_{j=0}^{i+1} \IC\left(\bar\mO_{2^j1^{2n-2j}},\bC^{\oplus s_{jk}}\right)
\eeq
where $s_{jk}\in\bZ_{\geq 0}$ and 
$\mO_{2^j1^{2n-2j}}$ are nilpotent orbits of order two in $\mathcal N_1$ (see \S\ref{order 2}).

\noindent Step 3: We prove  
an explicit formula for those $s_{jk}$ (see \S\ref{numbers}) and compute the Fourier transform of 
$\IC(\bar\mO_{2^i1^{2n-2i}},\bC)$. We show that the Fourier transform is given by
\[\mathfrak F\left(\IC(\bar\mO_{2^i1^{2n-2i}},\bC)\right)\cong\IC(\fg_1,\cW_i)\] 
where $\cW_i$ is an irreducible local system on $\fg_1^{rs}$ coming from the 
universal family of 
hyperelliptic curves of genus $n-1$ over $\fg_1^{rs}$ 
(see Proposition \ref{prop-matching}). 
This can be viewed as 
an example of Springer correspondence for the symmetric pair $(\rSL(2n),\rSO(2n))$. Now  taking the 
Fourier transform on both sides of  (\ref{iso}) we obtain the decomposition 
of $\IC(\fg_1,R^k\pi_{*}\bC)$ into irreducibles, hence that of $R^k\pi_{*}\bC$. The theorem follows.

It follows from the proof that the isomorphism in Theorem \ref{thm-fano} is compatible with 
the natural actions of the fundamental group of the universal family of hyperelliptic curves 
(or rather the braid group $B_{2n}=\pi_1\left(\fg_1^{rs}\sslash\rSO(2n)\right)$
on both sides.

As we mentioned earlier, our method of computing cohomology of Fano varieties can be 
applied to a more general situation, namely, the case of Hessenberg varieties. 
Those varieties, first introduced in \cite{DPS}, play an important role in 
algebraic geometry and representation theory.
For example, in \cite{CVX1} we show that 
complete intersections of quadrics 
are examples of Hessenberg varieties, and 
in \cite{GKM,OY,T} the authors 
show that many important questions in 
harmonic analysis on $p$-adic groups and  representations of Cherednik algebras are 
closely related to cohomology of Hessenberg varieties. 
It would be very interesting to understand the cohomology of 
those Hessenberg varieties
from the perspective of our method.

The paper is organized as follows. In Section~\ref{first section} we explain how the theory developed in \cite{CVX} for odd $N$ can be carried over to the case of even $N$
and we establish the Springer correspondence 
for nilpotent orbit of order two
for the symmetric pair $(\rSL(2n),\rSO(2n))$.  
In Section~\ref{second section} we give the proof of our main theorem. The appendix by Dennis Stanton contains the proof of the combinatorial identity.

{\bf Acknowledgements.}
We thank the Max Planck Institute for Mathematics in Bonn for support, hospitality, and a nice research environment. Furthermore KV and TX thank the Research Institute for Mathematical Sciences in Kyoto  for support, hospitality, and a nice research environment. Special thanks are due to Dennis Stanton for proving a key combinatorial identity and for writing an appendix containing the proof. We also thank the referee for the careful reading of the paper.

\section{Springer correspondence for the symmetric pair \texorpdfstring{$(\rSL(2n),\rSO(2n))$}{lg}}
\label{first section}

In this section we discuss the Springer correspondence for nilpotent orbits of order 2 for the symmetric pair $(\rSL(2n),\rSO(2n))$. We have treated the case $(\rSL(2n+1),\rSO(2n+1))$ in \cite{CVX}. Some of the arguments in this section are modifications of the arguments in \cite{CVX}. In this section we have written $n$ for $g+1$ and $n \geq 2$.

We follow the notational conventions of \cite{CVX}. In particular, we adopt the usual convention of cohomological degrees for perverse sheaves by having them be symmetric around 0. We also use the convention that all functors are derived, so we write, for example, $\pi_*$ instead of $R\pi_*$. If $X$ is smooth we write $\bC_X[-]$ for the constant sheaf placed in degree $-\dim X$ so that $\bC_X[-]$ is perverse. If $U\subset X$ is a smooth open dense subset of a variety $X$ and $\calL$ is a local system on $U$, we write $\IC(X,\calL)$ for the IC-extension of $\calL[-]$ to $X$; in particular, it is perverse.  For $\mF\in \on{D}(X)$ and $x\in X$, we write $\mH^i_x(\cF)$ for the stalk of the cohomology sheaf $\mH^i\cF$ at $x$. This should not be confused with local cohomology. For a number $a$, we denote the integer part of $a$ by $[a]$.

\subsection{The symmetric pair \texorpdfstring{$(\rSL(2n),\rSO(2n))$}{lg}}\label{order 2}
Let $G=\rSL(2n,\bC)$ and $\theta:G\to G$ an involution such that $K\coloneqq G^\theta=\rSO(2n,\bC)$.  We also write $(G,K) = (\rSL(V),\rSO(V,Q))$, where $\dim\,V = 2n$.  We think of $Q$ concretely as a non-degenerate quadratic form on $V$ and we write $\langle\ ,\rangle_Q$ for the non-degenerate bilinear form on $V$ associated to $Q$. The involution $\theta$ induces a grading on $\Lg=\on{Lie}\,G$, i.e. $\Lg=\Lg_0\oplus\Lg_1$, where $\theta|_{\Lg_i}=(-1)^i$. If we diagonalize $Q$ then the Cartan involution $\theta$ is given by $g\mapsto (g^t)^{-1}$ and then $\Lg_1$ consists of symmetric matrices. 

The pair $(G,K)$ is a split pair.  We write $\Lg^{rs}$ for the regular semi-simple elements in $\Lg$ and we let $\Lg_1^{rs}=\Lg_1\cap\Lg^{rs}$. Furthermore, we write $A$ for a $\theta$-stable  maximal split torus of $G$, i.e., $\theta(t)=t^{-1}$ for $t\in A$. We write $\pi_1^K(\Lg_1^{rs})$ for the equivariant fundamental group of $\Lg_1^{rs}$ and we have
\beqn
\pi_1^K(\Lg_1^{rs})=A[2]\rtimes B_{2n},
\eeqn
where $A[2]$ is the group of order 2 elements in $A$ and $B_{2n}$ is the braid group. For a discussion of these matters see \cite[\S 2.6]{CVX}.

We also write $\cN$ for the nilpotent cone of $\Lg$ and we let $\cN_1=\cN\cap\Lg_1$ stand for  the nilpotent cone of $\Lg_1$. The $G$-orbits in $\cN$ are parametrized by partitions $\lambda$ of $2n$ and we write $\tilde{\cO}_\lambda$ for the $G$-orbit corresponding to the partition $\lambda$. The intersection of a $G$-orbit $\tilde{\cO}_\lambda$ with $\cN_1$ is one $K$-orbit in the case when not all parts of $\lambda$ are even and decomposes into two $K$-orbits otherwise \cite{S}.

For $i\in[0,n-1]$, let $\cO_{2^i1^{2n-2i}}$ 
denote the (unique) nilpotent $K$-orbit in $\Lg_1$ corresponding to the partition $\lambda = 2^i1^{2n-2i}$, where $i$ (resp. $2n-2i$) is the multiplicity of $2$ (resp. $1$) in $\lambda$. Let us denote by $\on{OGr}(s,2n)$ the variety of $s$-dimensional isotropic subspaces in $\bC^{2n}$ with respect to a non-degenerate bilinear form. We write $V_i$ for a vector subspace of $V=\bC^{2n}$ of dimension $i$ and $V_i^\p=\{x\in V\,|\,\langle x,V_i\rangle_Q=0\}$.

\subsection{Reeder's resolutions for \texorpdfstring{$\bar\cO_{2^i1^{2n-2i}}$}{lg}}

Consider the natural projection maps
\beq\label{resolutions}
\upsilon_i:\{(x,0\subset V_i\subset V_i^\p\subset\bC^{2n})\,|\,x\in\Lg_1,\ xV_i^\p=0\}\to\bar\cO_{2^i1^{2n-2i}}.
\eeq
The $\upsilon_i$'s are Reeder's resolutions for $\bar\cO_{2^i1^{2n-2i}}$ \cite{R}. Note that $\cO_{2^j1^{2n-2j}}\subset \bar\cO_{2^i1^{2n-2i}}$ if and only if $j\leq i$.

\begin{lemma}
\label{eqn-fiber}
Let $x_j\in\cO_{2^j1^{2n-2j}}$, $j\leq i$. We have 
\beqn
\upsilon_i^{-1}(x_j)\cong\on{OGr}(i-j,2n-2j).
\eeqn
Furthermore, the component group $A_K(x_j)\coloneqq Z_K(x_j)/Z_K(x_j)^0$ of the centralizer $Z_K(x_j)$ acts trivially on $H^*(\upsilon_i^{-1}(x_j),\bC)$. 
\end{lemma}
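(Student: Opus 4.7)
My plan splits into two parts: identifying the fiber, and then verifying that the component-group action on its cohomology is trivial.

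For the fiber identification, I would exploit two properties of $x_j$. Since $x_j$ lies in an orbit of Jordan type $2^j1^{2n-2j}$ it satisfies $x_j^2=0$, so $\Img x_j\subseteq\ker x_j$ with $\dim\Img x_j=j$ and $\dim\ker x_j=2n-j$; and since $x_j\in\fg_1$ is self-adjoint with respect to $\langle\,,\,\rangle_Q$, one has $(\Img x_j)^\perp=\ker x_j$. Consequently the form descends to a nondegenerate symmetric form on $\overline V:=\ker x_j/\Img x_j$, a quadratic space of dimension $2n-2j$. The defining condition $x_jV_i^\perp=0$ of the fiber means $V_i^\perp\subseteq\ker x_j$, which by taking perpendiculars is equivalent to $\Img x_j\subseteq V_i$. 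Combined with the isotropy of $V_i$, this exhibits $V_i/\Img x_j$ as an isotropic $(i-j)$-plane in $\overline V$; conversely any such plane lifts uniquely to an admissible $V_i$. This yields $\upsilon_i^{-1}(x_j)\cong\on{OGr}(i-j,2n-2j)$.

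For the action statement, any $g\in Z_K(x_j)$ preserves both $\Img x_j$ and $\ker x_j$, and therefore induces an orthogonal automorphism of $\overline V$; this defines a homomorphism $\rho:Z_K(x_j)\to O(\overline V)$ through which the natural $Z_K(x_j)$-action on $\upsilon_i^{-1}(x_j)=\on{OGr}(i-j,\overline V)$ factors. My plan is to show that $\rho$ lands in $SO(\overline V)$. Once this is established, the action on cohomology is automatically trivial: $SO(\overline V)$ being connected, each of its elements is path-connected to the identity, so its action on $\on{OGr}(i-j,\overline V)$ is homotopic to the identity and hence induces the identity on $H^*$. In particular $A_K(x_j)$ acts trivially.

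The main technical obstacle is the claim $\rho(Z_K(x_j))\subseteq SO(\overline V)$, which I would settle by a determinant computation. Choosing a splitting of the filtration $0\subset\Img x_j\subset\ker x_j\subset V$ one factors $\det_V g=\det(g|_{V/\ker x_j})\cdot\det(g|_{\overline V})\cdot\det(g|_{\Img x_j})$. Two relations tie together the outer factors: first, $x_j$ induces a $g$-equivariant isomorphism $V/\ker x_j\xrightarrow{\sim}\Img x_j$, forcing $\det(g|_{V/\ker x_j})=\det(g|_{\Img x_j})$; second, since $\ker x_j=(\Img x_j)^\perp$, the form $Q$ gives a $g$-invariant nondegenerate pairing between $\Img x_j$ and $V/\ker x_j$, forcing $\det(g|_{V/\ker x_j})\cdot\det(g|_{\Img x_j})=1$. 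Together these give $(\det g|_{\Img x_j})^2=1$, so $\det_V g=\det(g|_{\overline V})$; since $g\in K=SO(V)$, the left side equals $1$, and hence $\rho(g)\in SO(\overline V)$. I expect this determinant bookkeeping to be the only place requiring care, but once the two intertwining relations are set up correctly the cancellation is forced and the lemma follows.
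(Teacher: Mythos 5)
Your proof is correct and self-contained. Since the paper itself defers to \cite{CVX} (``proceeds in the same manner as the proofs of displayed statements (7.1) and (7.2)'') rather than spelling out the argument, your write-up supplies precisely the kind of linear-algebra argument one would expect that reference to contain: use the self-adjointness of $x_j$ to get $(\Img x_j)^\perp=\ker x_j$, descend the form to $\overline V=\ker x_j/\Img x_j$, translate $x_jV_i^\perp=0$ into $\Img x_j\subseteq V_i\subseteq\ker x_j$, and identify the fiber with $\on{OGr}(i-j,\overline V)$; then show the $Z_K(x_j)$-action factors through $SO(\overline V)$, which is connected (note that $i-j<n-j$ guarantees $\on{OGr}(i-j,2n-2j)$ is itself connected, so no subtlety from the two families of maximal isotropics arises), so the action on cohomology is homotopically trivial.

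One small stylistic remark on the determinant step: relation~2, namely the $g$-equivariant perfect pairing $\Img x_j\times(V/\ker x_j)\to\bC$ induced by $Q$, already gives $\det(g|_{\Img x_j})\cdot\det(g|_{V/\ker x_j})=1$ and hence $\det_Vg=\det(g|_{\overline V})$ directly; relation~1 (the $g$-equivariant isomorphism $x_j\colon V/\ker x_j\xrightarrow{\sim}\Img x_j$) is not actually needed to reach the conclusion, though it does give the extra fact $\det(g|_{\Img x_j})^2=1$. This is a minor redundancy, not an error.
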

\begin{proof}The proof proceeds in the same manner as the proofs of displayed statements \cite[(7.1) and (7.2)]{CVX} for the maps denoted there by $\sigma_i$.

\end{proof}
Making use of Lemma~\ref{eqn-fiber}, one readily checks that for $x_j\in\cO_{2^j1^{2n-2j}}$, 
$
2\dim\,\upsilon_i^{-1}(x_j)-\on{codim}_{\bar\cO_{2^i1^{2n-2i}}}\cO_{2^j1^{2n-2j}}=(i-j)(2n-2i-1)
$. Let us write
\beqn
m_{ij}=(i-j)(2n-2i-1).
\eeqn
It  follows from the decomposition theorem and the lemma above that we have the following decomposition
\beq\label{decomposition sigmai-even}
\upsilon_{i*}\bC[i(2n-i)]\cong\bigoplus_{j=0}^i\bigoplus_{k=0}^{m_{ij}}\IC\left(\bar\cO_{2^j1^{2n-2j}},\bC^{s^i_{jk}}\right)[\pm k]
\eeq
where $s^i_{jk}$ are non-negative integers and $s^i_{ik}=\delta_{0,k}$. The fact that all the local systems appearing in this decomposition are trivial follows from the lemma. 

In what follows we write $(s^i_{jk})_n$ for the numbers $s_{jk}^i$ to indicate that the ambient symmetric pair is $(\rSL(2n),\rSO(2n))$. The lemma below allows us to compute the numbers $(s^i_{jk})_n$ and the stalks of the $\IC(\bar\cO_{2^i1^{2n-2i}},\bC)$ simultaneously by induction.

\begin{lemma}\label{lemma induction}We have
\begin{enumerate}
\item  $(s^i_{lk})_n=(s^{i-j}_{l-j,k})_{n-j}$.
\item  $\mathcal H^{k}_{x_j}\IC(\bar\cO_{2^i1^{2n-2i}},\bC)=\mathcal H^{k+t_j}_0\IC(\bar\cO_{2^{i-j}1^{2n-2i}},\bC)$, where $x_j\in\cO_{2^j1^{2n-2j}}$ and  $t_j=j(2n-j)$.
\end{enumerate}
\end{lemma}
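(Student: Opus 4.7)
The plan is to reduce both statements to a transverse slice argument that passes from $(\rSL(2n),\rSO(2n))$ to the smaller symmetric pair $(\rSL(2n-2j),\rSO(2n-2j))$. First I would use Kostant--Rallis to choose a normal $\mathfrak{sl}_2$-triple $(x_j,h_j,y_j)$ with $h_j\in\Lg_0$, $y_j\in\Lg_1$, and form the Slodowy-type slice $S:=x_j+Z_{\Lg_1}(y_j)\subset\Lg_1$, which is transverse at $x_j$ to the $K$-orbit $\cO_{2^j1^{2n-2j}}$ of dimension $j(2n-j)=t_j$. The $\mathfrak{sl}_2$-action splits $V=V'\oplus V''\oplus V'''$ into its $+1,-1,0$ eigenspaces for $h_j$; the form $Q$ pairs $V'$ with $V''$ (each of dimension $j$) and restricts to a non-degenerate quadratic form on $V'''$ of dimension $2n-2j$. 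The $Q|_{V'''}$-symmetric matrices on $V'''$ thus embed into $Z_{\Lg_1}(y_j)$, giving a distinguished inclusion $x_j+\Lg'_1\hookrightarrow S$, where $\Lg'_1$ is the relevant piece of the smaller symmetric pair.

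The central geometric step is the identification
\[
S\cap\bar\cO_{2^i1^{2n-2i}} \ \simeq\ \bar\cO_{2^{i-j}1^{2n-2i}}
\]
(with the right side taken inside $\Lg'_1$), under which $x_j$ corresponds to $0$. This is a Kraft--Procesi type transverse slice statement: an element $x_j+x'$ with $x'\in\Lg'_1$ already commutes with $y_j$, so by standard $\mathfrak{sl}_2$-theory its Jordan type is the concatenation of $2^j$ with that of $x'$, and one checks that every point of $S\cap\bar\cO_{2^i1^{2n-2i}}$ is $K$-conjugate within $S$ to such an element. For part (1) one also needs that Reeder's resolution is compatible with the slice: a pair $(x,V_i)$ over $x\in S\cap\bar\cO_{2^i1^{2n-2i}}$ with $xV_i^\p=0$ forces $V_i$ to split as $\img(x_j)\oplus V'_{i-j}$ with $V'_{i-j}$ an isotropic $(i-j)$-plane in $V'''$; this identifies $\upsilon_i^{-1}(S)\to S\cap\bar\cO_{2^i1^{2n-2i}}$ with the corresponding Reeder resolution $\upsilon'_{i-j}$ in the smaller symmetric pair.

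Part (2) then follows from the standard transverse slice formula $\IC(\bar\cO_{2^i1^{2n-2i}},\bC)|_S\cong\IC(\bar\cO_{2^i1^{2n-2i}}\cap S,\bC)[t_j]$ valid in a neighborhood of $x_j$: passing to stalks and invoking the slice identification yields $\mathcal H^k_{x_j}\IC(\bar\cO_{2^i1^{2n-2i}},\bC) = \mathcal H^{k+t_j}_0\IC(\bar\cO_{2^{i-j}1^{2n-2i}},\bC)$. Part (1) follows by restricting \eqref{decomposition sigmai-even} to $S$ and comparing with the analogous decomposition of $\upsilon'_{i-j,*}\bC[(i-j)(2(n-j)-(i-j))]$ produced by the decomposition theorem in the smaller pair. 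The cohomological shifts on the two sides match because $i(2n-i)=(i-j)(2(n-j)-(i-j))+t_j$, and pairing IC summands supported on $\bar\cO_{2^{l-j}1^{2n-2l}}$ for $l\geq j$ reads off $(s^i_{lk})_n=(s^{i-j}_{l-j,k})_{n-j}$.

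The principal obstacle is rigorously verifying the two geometric inputs — the slice identification $S\cap\bar\cO_{2^i1^{2n-2i}}\simeq\bar\cO_{2^{i-j}1^{2n-2i}}$ and the compatibility of Reeder resolutions under the slice — both of which are adaptations of Kraft--Procesi type arguments to the symmetric pair setting. Neither is conceptually deep, but each requires careful tracking of the isotropy condition relative to $Q$ through the $\mathfrak{sl}_2$-triple decomposition of $V$ and showing that $K$-conjugacy inside $S$ corresponds to conjugacy under the smaller group $K'=\rSO(V''',Q|_{V'''})$; the latter can be handled by a Luna slice argument.
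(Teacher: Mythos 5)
The paper itself does not give a proof here; it cites \cite[\S 7.5]{CVX} for the odd case $(\rSL(2n+1),\rSO(2n+1))$ and asserts that the argument adapts. Your proposal — a normal $\mathfrak{sl}_2$-triple via Kostant--Rallis, the restricted Slodowy slice $S=x_j+Z_{\Lg_1}(y_j)$, the decomposition $V=V'\oplus V''\oplus V'''$, and a reduction to the smaller symmetric pair on $V'''$ — is the standard technique for this kind of inductive statement and is, to the best of my knowledge, the same route taken in \cite[\S 7.5]{CVX}. The cohomological shift check $i(2n-i)=(i-j)\bigl(2(n-j)-(i-j)\bigr)+t_j$ is correct, and the analysis that an isotropic $V_i\supset V'=\img(x_j)$ inside $V'\oplus V'''$ splits as $V'\oplus V'_{i-j}$ with $V'_{i-j}\subset V'''$ isotropic is the right local computation for the compatibility of the Reeder resolutions.

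Two places deserve more care than the sketch supplies, both of which you correctly flag as the ``principal obstacle''. First, the phrase ``every point of $S\cap\bar\cO_{2^i1^{2n-2i}}$ is $K$-conjugate within $S$ to such an element'' does not by itself yield a variety isomorphism (or even smooth equivalence) between $S\cap\bar\cO_{2^i1^{2n-2i}}$ and $\bar\cO_{2^{i-j}1^{2n-2i}}$; the input actually needed is a symmetric-pair analogue of the Kraft--Procesi row-removal lemma, which applies here precisely because the partitions $2^j1^{2n-2j}$ and $2^i1^{2n-2i}$ share their first $j$ rows (all equal to $2$), so those $j$ rows can be deleted. Second, for part (1) you verify the compatibility of the Reeder resolution only over the sub-slice $x_j+\Lg'_1$, whereas $S=x_j+Z_{\Lg_1}(y_j)$ is strictly larger (it contains the negative $\ad h_j$-weight components of $Z_\Lg(y_j)$). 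One needs either the full row-removal isomorphism of slices, after which the resolution compatibility propagates, or a contracting $\bbG_m$-action on $S$ to localize the decomposition-theorem comparison at $x_j$. With those two points made precise the argument goes through, and it coincides with the one the paper delegates to \cite[\S 7.5]{CVX}.
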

\begin{proof}
In \cite[\S 7.5]{CVX} we proved this lemma in the odd case, i.e., for the symmetric pair $(\rSL(2n+1),\rSO(2n+1))$. This argument can be readily adapted to the even case \linebreak$(\rSL(2n),\rSO(2n))$ we consider in this paper. 
\end{proof}

\subsection{Fourier transforms of \texorpdfstring{$\IC(\bar\cO_{2^i1^{2n-2i}},\bC)$}{lg}.}
Let $\fF:\mathrm{D}_K(\Lg_1)\to \mathrm{D}_K(\Lg_1)$ be the Fourier transform where we have identified $\Lg_1$ with $\Lg_1^*$ via a $K$-invariant non-degenerate bilinear form on $\Lg_1$. Then $\fF$ induces an equivalence of categories $\on{Perv}_K(\Lg_1)\to\on{Perv}_K(\Lg_1)$, where $\on{Perv}_K(\Lg_1)$ is the category of $K$-equivariant perverse sheaves on $\Lg_1$. In this subsection we study the Fourier transforms of $\IC(\bar\cO_{2^i1^{2n-2i}},\bC)$, $i\in[0,n-1]$.

Let us choose a Cartan subspace $\fa$ of $\Lg_1$ such that it consists of diagonal matrices. Let $\fa^{rs}=\fa\cap\Lg^{rs}$ denote the set of regular semisimple elements in $\fa$. For $a\in\fa$ with diagonal entries $a_1,\ldots,a_{2n}$, we write
$a=(a_1,\ldots,a_{2n})$. Thus $a=(a_1,\ldots,a_{2n})\in\fa^{rs}$ if and only if $a_i\neq a_j$ for $i\neq j$.

Consider the family $C\ra\fg_1^{rs}$ whose fiber over $\gamma\in\fg_1^{rs}$ is the 
hyperelliptic curve $C_\gamma$ with affine equation $y^2=\on{det}(t\cdot\on{id}-\gamma)$. The family above 
is constant on $K$-orbits and descends to the 
universal family $\bar C\ra \fg_1^{rs}\sslash K=\fa^{rs}/S_{2n}$ of hyperelliptic curves of genus $n-1$: to each $a=(a_1, \dots,a_{2n})\in \fa^{rs}$ we associate the hyperelliptic curve $\bar C_a$ over $\bP^1$ which ramifies at $\{a_1, \dots,a_{2n}\}$. 
The family 
$C\ra\fg_1^{rs}$
gives us a monodromy representation $$\pi_1^K(\fg_1^{rs},\gamma)\to \pi_1(\fg_1^{rs}\sslash K,\gamma)=B_{2n}\ra \mathrm{Sp}\left(H^1(C_\gamma,\bC)\right).$$ Note that, by \cite{A} (see also \cite{KS}) this monodromy representation has a Zariski dense image. 
From this we get a monodromy representation on the cohomology of the Jacobian of $C_\gamma$ which we break into primitive parts:
\beqn
\pi_1^K(\fg_1^{rs},\gamma) \to \mathrm{Sp}\left(H^i\left(\on{Jac}(C_\gamma),\bC\right)_{\mathrm{prim}}\right)\cong\mathrm{Sp}\left(\left(\wedge^iH^1(C_\gamma,\bC)\right)_{\mathrm{prim}}\right),\ \ i\in[1,n-1]\,.
\eeqn
Associated to this representation $W_i$ we obtain an irreducible $K$-equivarant local system $\calW_i$ on $\fg_1^{rs}$. Note that the part $A[2]$ of $\pi_1^K(\Lg_1^{rs})$ acts trivially on $\calW_i$. It is clear that we have $\calW_i\ncong\calW_j$ for $i\neq j$ and
\beqn
\dim W_i= \binom{2n-2} {i}-\binom {2n-2}{i-2}.
\eeqn

\begin{proposition}\label{prop-matching}
We have
\beqn
\fF\left(\IC(\bar\cO_{2^i1^{2n-2i}},\bC)\right)\cong\IC(\Lg_1,\mathcal{W}_i),\ i\in[0,n-1]
\eeqn
where $\calW_i$ are irreducible $K$-equivariant local systems  on $\Lg_1^{rs}$ defined above and $\calW_0=\bC$ is the trivial local system.
\end{proposition}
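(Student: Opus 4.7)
The plan is to compute $\fF(\IC(\bar\cO_{2^i 1^{2n-2i}}, \bC))$ via Reeder's resolution, identify its restriction to $\Lg_1^{rs}$ as the generic cohomology of a Hessenberg variety related to the hyperelliptic pencil, and then match it with $\calW_i$ by induction on $i$. The case $i = 0$ is immediate since $\IC(\bar\cO_{1^{2n}}, \bC) = \bC_{\{0\}}$ Fourier-transforms to the constant sheaf $\bC_{\Lg_1} = \IC(\Lg_1, \calW_0)$ with $\calW_0 = \bC$. For $i \geq 1$, the strategy is to reduce both sides to IC-extensions of irreducible $K$-equivariant local systems on $\Lg_1^{rs}$ and compare them there.

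The key observation is that the source of Reeder's resolution $\upsilon_i$ from \eqref{resolutions} is the total space of a vector subbundle $E \subset \Lg_1 \times \on{OGr}(i, 2n)$ with fiber $E_{V_i} = \{x \in \Lg_1 : xV_i^\perp = 0\}$ over $V_i$, so $\upsilon_i$ factors through the proper projection $p : \Lg_1 \times \on{OGr}(i, 2n) \to \Lg_1$. A direct trace-pairing computation in a $Q$-adapted basis splitting $V = V_i \oplus V_i^\perp/V_i \oplus V/V_i^\perp$ identifies the orthogonal-complement subbundle as having fiber $E^\perp_{V_i} = \{y \in \Lg_1 : y V_i \subset V_i^\perp\}$. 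The standard compatibility of the Fourier transform with vector subbundles, together with proper base change along $p$, then gives
\beqn
\fF(\upsilon_{i*}\bC)\ \cong\ \tau_{i*}\bC\,[d_i]
\eeqn
for an explicit integer $d_i$, where $\tau_i : E^\perp \to \Lg_1$ is the total-space projection. The fiber of $\tau_i$ over $\gamma \in \Lg_1^{rs}$ is $\{V_i \in \on{OGr}(i, 2n) : \gamma V_i \subset V_i^\perp\}$, which, because $\gamma$ is $Q$-symmetric, coincides with the Fano variety of $(i-1)$-planes in the smooth complete intersection of the two quadrics $\{Q = 0\}$ and $\{Q_\gamma = 0\}$ in $\bP^{2n-1}$, where $Q_\gamma(v) := \langle v, \gamma v\rangle_Q$.

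Proceeding by induction on $i$, I would Fourier-transform the decomposition \eqref{decomposition sigmai-even} and restrict to $\Lg_1^{rs}$: the inductive hypothesis identifies the contributions from $j < i$ as $\IC(\Lg_1, \calW_j)^{\oplus s^i_{j,0}}$, leaving $\fF(\IC(\bar\cO_{2^i 1^{2n-2i}}, \bC))|_{\Lg_1^{rs}}$ as the unaccounted-for irreducible summand. Using the explicit multiplicity formula for $s^i_{j,0}$ from \S\ref{numbers}, one computes its rank to be $\binom{2n-2}{i} - \binom{2n-2}{i-2} = \dim W_i$. To upgrade this rank equality to an isomorphism with $\calW_i$ itself, I would invoke the classical spectral / Abel--Jacobi construction relating the cohomology of the Hessenberg (= Fano) variety $\tau_i^{-1}(\gamma)$ to wedge powers of $H^1(\bar C_\gamma, \bC)$, bootstrapping from Reid's theorem \cite{Re} that handles the case $i = 1$ (where $\tau_1^{-1}(\gamma) = \{Q=0\}\cap\{Q_\gamma=0\}$ has primitive middle cohomology $H^1(\bar C_\gamma, \bC)$), together with the Zariski density \cite{A, KS} of the braid monodromy in $\on{Sp}(H^1(\bar C_\gamma, \bC))$, which forces the $\calW_j$ to be pairwise non-isomorphic irreducibles so that dimension matching suffices.

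The hard part is the last step: producing $\calW_i$ inside the Hessenberg-variety cohomology in a way that is independent of the Fano cohomology formula we eventually want to derive. Circularity is avoided by using the spectral construction only qualitatively — namely, to detect $\calW_i$ as an irreducible summand of the correct rank inside $R\tau_{i*}\bC|_{\Lg_1^{rs}}$ — and leveraging the combinatorial control over $s^i_{j,0}$ from \S\ref{numbers} together with the Zariski-density irreducibility of the $\calW_j$ to single out the unique candidate.
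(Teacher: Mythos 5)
Your high-level strategy (Fourier-transform the Reeder resolution, identify the dual-fiber as a Fano variety, match irreducible $K$-equivariant local systems on $\Lg_1^{rs}$) follows the paper's, and your identification of $E^\perp$ and the use of functoriality of $\fF$ along the vector subbundle $E \subset \Lg_1\times\on{OGr}(i,2n)$ are exactly right. The genuine gap is in the final step of your induction, where you need to identify the ``new'' irreducible summand of $R\check\upsilon_{i*}\bC|_{\Lg_1^{rs}}$ with $\calW_i$ (rather than merely a local system of the same rank). For $i=1$ this is Reid's theorem on the primitive middle cohomology of the $(2n-4)$-dimensional intersection of two quadrics. But for general $i$ there is no classical ``spectral/Abel--Jacobi'' result describing the monodromy of $H^*(\Fano{i-1})$ as a $\pi_1^K(\Lg_1^{rs})$-representation in terms of $\wedge^\bullet H^1(C)$ --- that description \emph{is} the content of Theorem \ref{thm-fano}, which makes this step circular as stated. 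Zariski density of the braid image in $\on{Sp}(H^1(C))$ makes the $\calW_j$ pairwise non-isomorphic irreducibles among the constituents of $\wedge^\bullet H^1(C)$, but it does not by itself rule out other irreducible $\on{Sp}$-representations of the same rank appearing in $R\check\upsilon_{i*}\bC|_{\Lg_1^{rs}}$, nor does it establish that the monodromy factors through $\wedge^\bullet H^1(C)$ at all.

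The paper sidesteps this by applying the argument only to the top resolution $\upsilon = \upsilon_{n-1}$. Its Fourier-dual fiber $F_\gamma$ is the Fano variety $\Fano{g-1}$, which by Reid--Donagi--Wang is a $\Jac(C_\gamma)$-torsor; there the monodromy on $H^*(F_\gamma)$ can be canonically identified with the monodromy on $H^*(\Jac(C_\gamma))=\wedge^\bullet H^1(C_\gamma)$. Even this requires care, since the family $F\to\Lg_1^{rs}$ has no natural section; the paper uses Wang's canonical involution $\sigma$ to produce a $\Jac(C)[2]$-torsor $F^\sigma$ and the isomorphism $(\Jac(C)\times_{\Lg_1^{rs}}F^\sigma)/\Jac(C)[2]\cong F$, à la \cite[Lemma 5.5]{CVX}, to transfer the monodromy. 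Comparing the resulting decomposition \eqref{decomp-sigmac} of $\check\upsilon_*\bC$ with the Reeder decomposition \eqref{decomp-sigma} of $\upsilon_*\bC$, and matching shifts and multiplicities (the $j=n-1$ term occurs exactly once at shift $0$ in each, etc.), one reads off $\fF\bigl(\IC(\bar\cO_{2^j1^{2n-2j}},\bC)\bigr)\cong\IC(\Lg_1,\calW_j)$ for \emph{all} $j$ simultaneously, with no need to analyze the intermediate Fano families. If you want to rescue your induction, you would need to supply, independently of the main theorem, a canonical embedding of $W_i$ into the cohomology of $\Fano{i-1}$; it is not obvious how to do that without running through the $\Jac(C)$-torsor argument anyway, at which point you might as well use the $i=n-1$ case directly.
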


\begin{proof}

Consider the map $\upsilon_{n-1}$ defined in \eqref{resolutions}. Let us write $\upsilon=\upsilon_{n-1}$ and 
\beqn
E=\left\{(x,0\subset V_{n-1}\subset V_{n-1}^\p\subset\bC^{2n})\,|\,x\in\Lg_1,\ xV_{n-1}^\p=0\right\}.
\eeqn 
Then   
$
\upsilon:E\to\bar\cO_{2^{n-1}1^2}
$ is the natural projection. 
It follows from \eqref{decomposition sigmai-even} that we have
\beq\label{decomp-sigma}
\upsilon_{*}\bC[n^2-1]\cong\bigoplus_{j=0}^{n-1}\bigoplus_{k=0}^{n-j-1}\IC\left(\bar\cO_{2^j1^{2n-2j}},\bC^{(s^{n-1}_{jk})_n}\right)[\pm k].
\eeq
Let $E^\p$ be the orthogonal complement of $E$ in the trivial bundle $\Lg_1\times X\to X$, where $X=\{V_{n-1}\,|\,0\subset V_{n-1}\subset V_{n-1}^\p\subset\bC^{2n}\}\cong\on{OGr}(n-1,2n)$, i.e., 
\beqn
E^\p=\{(x,0\subset V_{n-1}\subset V_{n-1}^\p\subset\bC^{2n})\,|\,x\in\Lg_1,\,xV_{n-1}\subset V_{n-1}^\p\}.
\eeqn Consider the natural projection map $\check{\upsilon}:E^\p\to\Lg_1$.
By  functoriality of the Fourier transform, we obtain:
\beq\label{FT}
\fF(\upsilon_*\bC[-])\cong\check{\upsilon}_*\bC[-].
\eeq
We show that  
\beq\label{decomp-sigmac}
\begin{gathered}
\check{\upsilon}_*\bC[-]=\left(\bigoplus_{k=0}^{[\frac{n-1}{2}]}\IC\left(\Lg_1,\bigoplus_{j=0}^k\calW_{2j}\right)[\pm(n-1-2k)]\right)
\\
\hspace{1.5in}\oplus\left(\bigoplus_{k=0}^{[\frac{n-1}{2}]-1}\IC\left(\Lg_1,\bigoplus_{j=0}^k\calW_{2j+1}\right)[\pm(n-2-2k)]\right)\oplus\cdots
\end{gathered}
\eeq
where the omitted term $\cdots$ consists of IC complexes that are supported on proper subsets of $\Lg_1$. Using \eqref{FT} and comparing \eqref{decomp-sigma} with \eqref{decomp-sigmac}, we conclude that the proposition holds. It remains to prove~\eqref{decomp-sigmac}.

For any $\gamma\in\fg_1^{rs}$ let
\beqn
F_\gamma=\check{\upsilon}^{-1}(\gamma)\cong\{0\subset V_{n-1}\subset V_{n-1}^\p\subset\bC^{2n}\,|\,\gamma\,V_{n-1}\subset V_{n-1}^\p\},
\eeqn
which is the Fano variety of $(n-2)$-planes in the smooth complete intersection of the two quadrics $Q=0$ and $Q_\gamma=\langle\gamma-,-\rangle_Q=0$. Recall the hyperelliptic curve $C_\gamma$. 
According to \cite{Re,Do,W}, 
there is a canonical action of $\Jac(C_\gamma)$ on $F_\gamma$ such that 
$F_\gamma$ becomes a $\Jac(C_\gamma)$-torsor under this action. This action extends to families: as $\gamma$ varies over $\fg_1^{rs}$, we obtain a $\Jac(C)$-torsor $\check{\upsilon}|_{\check{\upsilon}^{-1}(\fg_1^{rs})}:F\ra\fg_1^{rs}$
of Fano varieties of $(n-2)$-planes in complete intersections of two quadrics. By taking cohomologies of fibers, the families $\Jac(C)$ and $F$ give rise to local systems on $\fg_1^{rs}$. We claim that those local systems coincide, i.e., that 
\beq
\label{mon F}
\begin{gathered}
\text{The $\pi_1^K(\fg_1^{rs},\gamma)$-representations
$H^i(\Jac(C_\gamma),\bC)$ \ \text{and}\  $H^i(F_\gamma,\bC)$}\\\text{are canonically isomorphic.}
\end{gathered}
\eeq
As $F$ does not appear to have a natural section we argue as follows. According to \cite{W} there is a
canonical involution $\sigma$ on $F_\gamma$ compatible with the inversion map on $\Jac(C_\gamma)$.
Let $F_\gamma^\sigma$ be the set of $\sigma$-fixed points on $F_\gamma$. Then $F_\gamma^\sigma$
is naturally a $\Jac(C_\gamma)[2]$-torsor. Thus, the $\Jac(C)$-torsor $F$ gives rise to a 
$\Jac(C)[2]$-torsor $F^{\sigma}\ra\fg_1^{rs}$ consisting of $\sigma$ fixed points on $F$.
We note that there is a canonical isomorphism \[(\Jac(C)\times_{\fg_1^{rs}} F^\sigma)/\Jac(C)[2]\cong F\] where $\Jac(C)[2]$ acts on $\Jac(C)\times_{\fg_1^{rs}} F^\sigma$ via the 
diagonal action. Moreover, arguing as in \cite[Lemma 5.5]{CVX}, we obtain \eqref{mon F}. 
We then conclude that 
\beqn
(R^i\check{\upsilon}_*\bC|_{\Lg_1^{rs}})_{\mathrm{prim}}\cong\calW_i, \ i\in[1,n-1].
\eeqn
Thus~\eqref{decomp-sigmac} follows.
\end{proof}

\section{Cohomology of Fano varieties}
\label{second section}
In this section we compute the cohomology of the Fano varieties $\Fano {k}$ of $k$-planes contained in the smooth complete intersection of two quadrics in $\bP^{2n-1}$, making use of the results in \S\ref{first section}.

\subsection{Fano varieties}Consider the natural projection maps 
\beqn
\check{\upsilon}_i:\{(x,0\subset V_i\subset V_i^\p\subset\bC^{2n})\,|\,x\in\Lg_1,\ xV_i\subset V_i^\p\}\to\Lg_1.
\eeqn
For $\gamma\in\Lg_1^{rs}$, the fiber $\check{\upsilon}_i^{-1}(\gamma)$ can be identified with the Fano variety $\Fano {i-1}$ of $(i-1)$-planes contained in the smooth complete intersection of the two quadrics $Q=0$ and $Q_\gamma=\langle\gamma-,-\rangle_Q=0$ in $\bP^{2n-1}$.  It is easy to see that
\beq\label{dimension}
d_i\coloneqq\dim\Fano {i-1}=\dim\check{\upsilon}_i^{-1}(\gamma)=i(2n-2i-1).
\eeq
Consider $\pi_i=\check{\upsilon}_i|_{\check{\upsilon}_i^{-1}(\Lg_1^{rs})}$, which is a smooth family of Fano varieties, and consider the corresponding local systems $R^k\pi_{i*}\bC$. 
Utilizing functoriality of the Fourier transform, just as in \eqref{FT}, we have:
\beqn 
\fF(\check\upsilon_{i*}\bC[-])\cong{\upsilon}_{i*}\bC[-].
\eeqn
Together with \eqref{decomposition sigmai-even}, this implies that
\beqn
\begin{gathered}
\fF\left(\check\upsilon_{i*}\bC[-]\right)\cong\bigoplus_{k=0}^{2d_i}\fF\left(\IC(\fg_1,R^k\pi_{i*}\bC)[-k+d_i]\right)\\
\cong
\bigoplus_{j=0}^i\bigoplus_{k=0}^{(i-j)(2n-2i-1)} \IC\left(\bar\cO_{2^{j}1^{2n-2j}},\bC^{(s^{i}_{jk})_n}\right)[\pm k].
\end{gathered}
\eeqn
Hence we see that 
$\fF\left(\IC(\fg_1,R^k\pi_{i*}\bC)\right)$ is supported on $\bar\cO_{2^i1^{2n-2i}}$, and 
has the form 
\[\fF\left(\IC(\fg_1,R^k\pi_{i*}\bC)\right)\cong\bigoplus_{j=0}^i \IC\left(\bar\cO_{2^j1^{2n-2j}},\bC^{(s^i_{j,|d_i-k|})_n}\right).\]
It follows from the discussion above and Proposition \ref{prop-matching} that the cohomology of the Fano variety $\Fano {i-1}$
is described as follows
\beq\label{eqn coho of fano even}
H^k(\Fano {i-1},\bC)\cong\bigoplus_{j=0}^i (s^i_{j,|d_i-k|})_n\,W_j.
\eeq
It remains to determine the numbers $(s_{jk}^{i})_n$.

\subsection{The numbers \texorpdfstring{$(s_{jk}^{i})_n$}{lg} and stalks for \texorpdfstring{$\IC({\cO}_{2^i1^{2n-2i}},\bC)$}{lg} at \texorpdfstring{$0$}{lg}}\label{numbers}
In this subsection, we give explicit formulas for the numbers $(s_{jk}^{i})_n$ and the dimensions of the stalks $\mH^k_0\IC({\cO}_{2^i1^{2n-2i}},\bC)$ in terms of their generating functions. Note first that Lemma \ref{lemma induction} (1) implies that 
\beq
\label{eq reduction}
(s^i_{jk})_n=(s^{i-j}_{0,k})_{n-j}\text{ for }j\geq 1.
\eeq
 Since $(s_{i,k}^i)_n=\delta_{0,k}$, it suffices to study the numbers $(s^i_{0,k})_n$. 

For $0\leq j\leq n-1$, let us define   
\beq\label{stalks}
h_j^{(n)}(q):=(-1)^jq^{j(2n-j)}\sum_{k}(-1)^k\left(\dim \mH^{k}_0\,\IC(\bar\cO_{2^{j}1^{2n-2j}},\bC)\right)q^k,
\eeq
and 
\beq\label{fn-numbers}
P_j^{(n)}(q):=(-1)^jq^{j(2n-2j-1)}\sum_{k=0}^{j(2n-2j-1)}(-1)^k(s^j_{0k})_n\,q^{\pm k}.
\eeq
Note that $h_0^{(n)}(q)=P_0^{(n)}(q)=1$ and 
\beq\label{eqn-properties}
\begin{gathered}
\text{For $j\geq 1$, the function $h_j^{(n)}(q)$ (resp. $P_j^{(n)}(q)$) is a polynomial in $q$ with}\\\text{degree no greater than $j(2n-j)-1$ (resp. $2j(2n-2j-1)$)}.
\end{gathered}
\eeq
\beq\label{eqn-properties2}
\text{The coefficients of $q^{j(2n-2j-1)+k}$ and $q^{j(2n-2j-1)-k}$ in $P_j^{(n)}(q)$ are equal for all $k\geq 1$.}
\eeq
The statement above for $h_j^{(n)}(q)$, $j\geq 1$, follows from the fact that $\mH^{k}_0\,\IC(\bar\cO_{2^{j}1^{2n-2j}},\bC)$ is non-zero only if $-\dim\cO_{2^{j}1^{2n-2j-1}}=-j(2n-j)\leq k\leq -1$.

For $0<i\leq k$, let us write $g_{i,k}(q)$ for the Poincare polynomial of the Grassmannian variety of $i$-dimensional subspaces in $\bC^k$, i.e.,
\beq\label{grassm1}
g_{i,k}(q)=\frac{\prod_{s=k-i+1}^{k}(1-q^{2s})}{\prod_{s=1}^i(1-q^{2s})}.
\eeq
Let us also define
\beq\label{grassm2}
g_{0,k}(q)=1\text{ and }g_{i,k}(q)=0\text{ for }i<0.
\eeq
\begin{proposition}
We have
\beq
\label{formula for h}
h_j^{(n)}(q)=  \sum_{k=0}^{[j/2]}\left(\left(\prod_{s=1}^{j-2k}(1+q^{2n-2s})\right)\cdot \left(g_{k,n-j+2k-1}(q^2)-g_{k-1,n-j+2k-1}(q^2)\right)\right),
\eeq
\beq\label{eqn-numbers}
P_j^{(n)}(q)=g_{j,2n-1-j}(q).
\eeq
 
\end{proposition}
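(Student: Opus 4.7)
My strategy is to extract a fundamental recursive identity from the decomposition \eqref{decomposition sigmai-even} by taking stalks at the origin, and then to prove both formulas by induction on $j$, reducing the inductive step to a combinatorial identity proved by Stanton in the appendix.

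First, I would take the stalk of \eqref{decomposition sigmai-even} at $0 \in \cN_1$. By Lemma~\ref{eqn-fiber}, $\upsilon_j^{-1}(0) \cong \on{OGr}(j, 2n)$, so the left-hand side contributes (a shift of) the Poincar\'e polynomial of this orthogonal Grassmannian. On the right-hand side, the summand $\IC(\bar\cO_{1^{2n}}, \bC) = \bC_{\{0\}}$ is a skyscraper whose multiplicity polynomial is $P_j^{(n)}(q)$ by definition \eqref{fn-numbers}; for $\ell \geq 1$, Lemma~\ref{lemma induction}(1) together with \eqref{eq reduction} identifies the multiplicity polynomial of $\IC(\bar\cO_{2^\ell 1^{2n-2\ell}},\bC)$ with $P_{j-\ell}^{(n-\ell)}(q)$, and Lemma~\ref{lemma induction}(2) together with \eqref{stalks} identifies its stalk at $0$ with $h_\ell^{(n)}(q)$. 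Tracking signs and the shifts $t_\ell = \ell(2n-\ell)$ and $m_{j\ell} = (j-\ell)(2n-2j-1)$ carefully, the exponent of $q$ attached to the $\ell$-th term simplifies to $j(2n-j) - m_{j\ell} - \ell(2n-\ell) = (j-\ell)(j-\ell+1)$, producing the fundamental identity
\begin{equation}\label{plan-fund}
P_{\on{OGr}(j,2n)}(q) \;=\; \sum_{\ell=0}^{j} q^{(j-\ell)(j-\ell+1)}\, P_{j-\ell}^{(n-\ell)}(q)\, h_\ell^{(n)}(q).
\end{equation}

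Next, I would prove both closed forms simultaneously by induction on $j$, the base case $j=0$ being trivial. For the inductive step, the $\ell = j$ term in \eqref{plan-fund} contributes $h_j^{(n)}(q)$, the $\ell = 0$ term contributes $q^{j(j+1)} P_j^{(n)}(q)$, and all intermediate terms are inductively known. Since by \eqref{eqn-properties} the polynomial $h_j^{(n)}(q)$ has degree at most $j(2n-j)-1$, while $q^{j(j+1)} P_j^{(n)}(q)$ has support in degrees $[j(j+1),\, j(4n-3j-1)]$ and is palindromic around $q^{j(2n-j)}$ by \eqref{eqn-properties2}, comparing coefficients of $q^d$ for $d \geq j(2n-j)$ in \eqref{plan-fund} isolates the top half, and hence (via palindromy) all, of $P_j^{(n)}(q)$; the polynomial $h_j^{(n)}(q)$ is then obtained by subtraction. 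Thus \eqref{plan-fund}, coupled with the inductive hypothesis and the palindromy \eqref{eqn-properties2}, pins down both $h_j^{(n)}(q)$ and $P_j^{(n)}(q)$ uniquely.

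The main obstacle is to verify that the expressions so produced match the claimed formulas \eqref{formula for h} and \eqref{eqn-numbers}. Substituting these closed forms into \eqref{plan-fund} and using the standard closed form for the Poincar\'e polynomial of $\on{OGr}(j,2n)$ reduces the inductive step to a single polynomial identity in $q$, expressing this Poincar\'e polynomial as a specific bilinear combination of the $q$-Gaussian binomials $g_{i,k}(q^2)$ and theta-like factors $\prod_s(1+q^{2n-2s})$. This is precisely the combinatorial identity \eqref{inductive eqn} alluded to in the introduction, whose proof is the content of Stanton's appendix. I do not expect this identity to admit a direct geometric or representation-theoretic interpretation; the purpose of the geometric setup above is precisely to reduce the proposition to it.
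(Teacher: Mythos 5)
Your proposal is correct and follows essentially the same approach as the paper: take stalks at $0$ in the decomposition \eqref{decomposition sigmai-even}, rewrite via \eqref{eq reduction} to obtain the recursion \eqref{inductive eqn} (your \eqref{plan-fund} with the correct exponent $(i-j)(i-j+1)$), establish uniqueness of the solution via the degree bound on $h_j^{(n)}$ together with palindromy of $P_j^{(n)}$, and then invoke Stanton's appendix (Theorem~\ref{mainformula} and Corollary~\ref{maincor}) to verify the explicit closed forms. The only cosmetic imprecision is your appeal to Lemma~\ref{lemma induction}(2) when identifying the stalk at $0$ with $h_\ell^{(n)}(q)$ — that identification is immediate from the definition \eqref{stalks} and needs no reference to Lemma~\ref{lemma induction}(2), which relates stalks at $x_j$ for $j>0$ to stalks at $0$.
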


\begin{proof}
Note that $\upsilon_i^{-1}(0)\cong\on{OGr}(i,2n)$. Let us write
\beqn
og_{j,2n}(q)=\sum_{k=0}^{r_j}(-1)^k\on{dim} H^{k}\left(\on{OGr}(j,2n),\bC\right)q^k,
\eeqn
where $r_j=2\dim\on{OGr}(j,2n)=j(4n-3j-1)$. The polynomials $og_{j,2n}(q)$ are well-known, i.e., 
\beqn
og_{j,2n}(q)=\frac{(1-q^{2n})\prod_{k=n-j}^{n-1}(1-q^{4k})}{(1-q^{2(n-j)})\prod_{k=1}^j(1-q^{2k})}.
\eeqn

 Taking stalks $\mH^k_0$ on both sides of the equation \eqref{decomposition sigmai-even}, we obtain that
\beq
\label{inductive formula}
\begin{gathered}
(-1)^{i}\,og_{i,2n}(q)\,q^{-i(2n-i)}=\qquad\qquad\qquad \qquad   \\
\sum_{j=0}^{i}\,\left(\sum_{k}(-1)^k\left(\dim \mH^{k}_0\,\IC(\bar\cO_{2^{j}1^{2n-2j}},\bC)\right)q^k\right)\cdot\left(\sum_{k=0}^{(i-j)(2n-2i-1)}\,(-1)^k\,(s^i_{jk})_n\,q^{\pm k}\right).
\end{gathered}
\eeq
Making use of \eqref{eq reduction}, the equation \eqref{inductive formula} can be written in terms of the functions defined in \eqref{stalks} and \eqref{fn-numbers} as 
\beq\label{inductive eqn}
og_{i,2n}(q)=\sum_{j=0}^{i}q^{(i-j)(i-j+1)}\,h_j^{(n)}(q)\,P_{i-j}^{(n-j)}(q).
\eeq
Observe that
\beq\label{observation}
\begin{gathered}
 \text{The equations \eqref{inductive eqn} determine the polynomials }h_i^{(n)}(q) \text{ and }P_i^{(n)}(q)\text{ uniquely}\\
 \text{given that }h_i^{(n)}(q)\text{ and }P_i^{(n)}(q)\text{ satisfy \eqref{eqn-properties} and \eqref{eqn-properties2} }.
 \end{gathered}
\eeq
In fact, this follows from a simple induction argument. Given that $P_i^{(n)}(q)$ satisfy \eqref{eqn-properties2}, it suffices to determine the coefficient of $q^k$ in $P_i^{(n)}(q)$ for $k\geq i(2n-2i-1)$, or equivalently, the coefficient of $q^k$ in $q^{i(i+1)}P_i^{(n)}(q)$ for $k\geq i(2n-i)$. By induction, we can assume that in \eqref{inductive eqn}, $h_j^{(m)}(q)$ and $P_{j}^{(m)}(q)$ are known for all $j<i$ and all $m$. Now the degree of $h_i^{(n)}(q)$ is at most $ i(2n-i)-1$. Thus \eqref{inductive eqn} determines $P_i^{(n)}(q)$ and $h_i^{(n)}(q)$ uniquely.

In view of the observation \eqref{observation}, the proposition follows from Theorem \ref{mainformula} and Corollary \ref{maincor} in Appendix \ref{appendix}. Note that $g_{i,k}(q)=\qbin{k}{i}{q^2}$ in the notation of Definition \ref{defn-a}.

\end{proof}

\begin{corollary}
We have
\begin{enumerate}
\item
 $h_i^{(n)}(q)\in \bZ_{\geq 0}[q^2]$.
\item
$\mH^{k}\IC(\bar\cO_{2^{i}1^{2n-2i}},\bC)=0$ if $k\equiv i-1\ (\on{mod}\ 2)$.
\end{enumerate}
\end{corollary}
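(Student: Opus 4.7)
The plan is to deduce both parts of the corollary directly from the explicit formula \eqref{formula for h} for $h_i^{(n)}(q)$, together with Lemma \ref{lemma induction}(2). For part (1), I observe first that the factor $\prod_{s=1}^{i-2k}(1+q^{2n-2s})$ is manifestly in $\bZ_{\geq 0}[q^2]$, and that $g_{m,N}(q^2)=\qbin{N}{m}{q^4}$ lies in $\bZ_{\geq 0}[q^4]\subset\bZ_{\geq 0}[q^2]$. The real content is therefore the non-negativity (in $q^4$) of
\[
\qbin{n-i+2k-1}{k}{q^4}-\qbin{n-i+2k-1}{k-1}{q^4}.
\]
Since $i\leq n-1$ we have $n-i+2k-1\geq 2k$, so the inequality one must invoke is the classical positivity statement: \emph{for $N\geq 2k-1$, the polynomial $\qbin{N}{k}{q}-\qbin{N}{k-1}{q}$ has non-negative integer coefficients}. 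This follows from the unimodality of Gaussian binomials, or alternatively from the identity $\qbin{N}{k}{q}-\qbin{N}{k-1}{q}=q^{k}\qbin{N-1}{k}{q}-q^{N-k+1}\qbin{N-1}{k-2}{q}$ combined with a $q$-log-concavity argument. Summing the non-negative contributions over $k$ gives $h_i^{(n)}(q)\in\bZ_{\geq 0}[q^2]$.

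For part (2), the parity vanishing is immediate from part (1) once one reads the defining identity \eqref{stalks}:
\[
h_i^{(n)}(q)=(-1)^{i}q^{i(2n-i)}\sum_{k}(-1)^{k}\bigl(\dim\mH^{k}_0\,\IC(\bar\cO_{2^{i}1^{2n-2i}},\bC)\bigr)q^{k}.
\]
Since $i(2n-i)\equiv i\pmod 2$, the fact that the left-hand side has only even powers of $q$ forces $\dim\mH^{k}_0\,\IC(\bar\cO_{2^{i}1^{2n-2i}},\bC)=0$ for every $k\not\equiv i\pmod 2$, i.e.\ for every $k\equiv i-1\pmod 2$. This takes care of the stalk at the origin. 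To propagate the vanishing to an arbitrary point $x_j\in\cO_{2^{j}1^{2n-2j}}\subset\bar\cO_{2^{i}1^{2n-2i}}$, I apply Lemma \ref{lemma induction}(2), which identifies $\mH^{k}_{x_j}\IC(\bar\cO_{2^{i}1^{2n-2i}},\bC)$ with $\mH^{k+t_j}_{0}\IC(\bar\cO_{2^{i-j}1^{2(n-j)-2(i-j)}},\bC)$ in the rank-$(n-j)$ symmetric space, where $t_j=j(2n-j)\equiv j\pmod 2$. The parity constraint $k\equiv i-1\pmod 2$ then translates precisely into $k+t_j\equiv(i-j)-1\pmod 2$, which is the vanishing condition for the smaller-rank orbit already established in the previous step. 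Hence $\mH^{k}\IC(\bar\cO_{2^{i}1^{2n-2i}},\bC)=0$ as a sheaf whenever $k\equiv i-1\pmod 2$.

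The only non-trivial input is the positivity of the Gaussian binomial difference used in part (1); this is where I expect a reader to demand a reference or a brief justification, but once granted, the remainder is a direct parity bookkeeping that combines the formula of the proposition with the inductive reduction of Lemma \ref{lemma induction}(2). No further geometric input is needed.
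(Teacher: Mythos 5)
Your proof is correct and for part (2) it is the same argument as the paper's: deduce vanishing of the stalk at $0$ from the parity of $h_i^{(n)}(q)$, then propagate to arbitrary points via Lemma~\ref{lemma induction}(2), noting $t_j\equiv j\pmod 2$.

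For part (1), you give the \emph{direct} proof via the explicit formula \eqref{formula for h}, which is precisely what the paper offers as an alternative by pointing to Corollary~\ref{maincor} in the appendix. The paper's primary route is slightly more economical: it first observes that it suffices to prove $h_i^{(n)}(q)\in\bZ[q^2]$, because once all exponents lie in one parity class, the sheaf-theoretic definition \eqref{stalks} forces the coefficients to be the (non-negative) dimensions $\dim\mH^k_0$; the evenness of exponents is then obtained from the inductive equation \eqref{inductive eqn} by induction on $i$, since $og_{i,2n}$, $P_{i-j}^{(n-j)}$, and $q^{(i-j)(i-j+1)}$ all lie in $\bZ[q^2]$. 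Your approach buys explicit positivity term by term but requires the non-trivial input that $\qbin{N}{k}{q}-\qbin{N}{k-1}{q}$ has non-negative coefficients; the paper's approach gets positivity for free once evenness is known. One small inaccuracy in your justification: ``unimodality of Gaussian binomials'' is a statement about the coefficient sequence of a single $\qbin{N}{k}{q}$ and does not directly give the termwise inequality $\qbin{N}{k}{q}\geq\qbin{N}{k-1}{q}$. The correct reference is Andrews' result on differences of successive Gaussian polynomials (cited as [An1] in the appendix), or equivalently the fake-degree / standard-Young-tableaux interpretation of this difference as a polynomial with non-negative coefficients.
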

\begin{proof}
For (1) it suffices to show that 
$h_i^{(n)}(q)\in\bZ[q^2]$. This  
follows from \eqref{inductive eqn} and induction on $i$. See also Corollary \ref{maincor} for a direct proof. To prove (2) 
we observe that (1) implies $\mH^{k}_0\IC(\bar\cO_{2^{i}1^{2n-2i}},\bC)=0$ if $k\equiv i-1\ (\on{mod}\ 2)$. Now the desired claim follows from 
Lemma \ref{lemma induction}.

\end{proof}
\begin{remark}
As noted in the introduction, it would be interesting to understand the formula \eqref{inductive eqn} in geometric terms. The formula involves doubling certain cohomological degrees and we have been unable to come up with a geometric interpretation for this phenomenon. 
\end{remark}

\begin{remark}
As discussed in the introduction, the functions $h_{m}^{(n)}(q)$ can be viewed as a symmetric space analogue of Kostka--Foulkes polynomials. In \cite[Theorem 7.1, Lemma 8.3]{CVX} we have shown that 
in the odd case, i.e., for the symmetric pair $(\rSL(2n+1),\rSO(2n+1))$, the local IC on nilpotent orbits of order two 
are isomorphic to 
the local IC on nilpotent orbits of order two in $\mathfrak{sp}(2n)$. In turn, we deduced that the corresponding ``Kostka--Foulkes polynomials" are given by the fake degree polynomials, which have a much simpler form.
\end{remark}

\subsection{Proof of Theorem \ref{thm-fano}}

Recall that $d_{i}=\dim\Fano {i-1}=i(2n-2i-1)$ (see \eqref{dimension}). Let us write $\bar\wedge^{j}=\bar\wedge^{j}\left(H^1(C,\bC)\right)=\wedge^{n-1-j}\left(H^1(C,\bC)\right)$.  
It follows from \eqref{eqn coho of fano even} and \eqref{eq reduction} that we have
\beqn
{H}^{d_{n-i}-k}(\Fano{n-i-1},\bC)\cong\bigoplus_{j=0}^{n-i} (s^{n-i}_{j,|k|})_nW_j\cong\bigoplus_{j=0}^{n-i}\left( (s^{n-i}_{j,|k|})_n-(s^{n-i}_{j+2,|k|})_n\right)\wedge^{j}\left(H^1(C,\bC)\right)
\eeqn
\beqn
\cong\bigoplus_{j=0}^{n-i}\left( (s^{n-i-j}_{0,|k|})_{n-j}-(s^{n-i-j-2}_{0,|k|})_{n-j-2}\right)\bar\wedge^{n-1-j}\cong\bigoplus_{j=i-1}^{n-1}\left( (s^{j-i+1}_{0,|k|})_{j+1}-(s^{j-i-1}_{0,|k|})_{j-1}\right)\bar\wedge^{j}.
\eeqn
Here we have used the convention that $(s^a_{0,b})_m=0$ if $a<0$. 
Thus we have  
$$N_i(k,j)=(s^{j-i+1}_{0,|k|})_{j+1}-(s^{j-i-1}_{0,|k|})_{j-1}.$$ 
Using \eqref{fn-numbers} and \eqref{eqn-numbers}, we see that 
\begin{itemize}
\item[] $\text{$(s^{j-i+1}_{0,|k|})_{j+1}$ is the coefficient of $q^k$ (or $q^{-k}$) in 
$(-1)^{j-i+1+k}q^{-(j-i+1)(2i-1)}g_{j-i+1,i+j}(q)$},$\\
\item[]
$\text{$(s^{j-i-1}_{0,|k|})_{j-1}$ is the coefficient of $q^k$ (or $q^{-k}$) in 
$(-1)^{j-i+1+k}q^{-(j-i-1)(2i-1)}g_{j-i-1,i+j-2}(q)$}.$
\end{itemize}
Using \eqref{grassm1} and \eqref{grassm2}, one readily checks that $N_i(k,j)$ is the coefficient of $q^k$ (or $q^{-k}$) in 
\beqn
(-1)^{j-i+1+k}q^{-(j-i+1)(2i-1)}(1-q^{4j})\frac{\prod_{l=j-i+2}^{i+j-2}(1-q^{2l})}{\prod_{l=1}^{2i-2}(1-q^{2l})}.
\eeqn
Note that such a coefficient is nonzero only if $k\equiv j-i+1$ (mod $2$); in the latter case $(-1)^{j-i-1+k}=1$. This proves the theorem as $n=g+1$.

\begin{remark}
For $i=2$, the formula in Theorem \ref{thm-fano} coincides with the formula in \cite[Theorem 1]{N}. 

\end{remark}

\begin{example} 
The cohomology of $\Fano 1$, the Fano variety of lines in the smooth complete intersection of two quadrics in $\bP^{2n-1}$
can be described as follows:
\beqn
H^{8n-20-k}(\Fano 1,\bC)\cong H^{k}(\Fano 1,\bC)
\eeqn
\beqn
\cong\left\{\begin{array}{ll}\bC^{[\frac{m+2}{2}]}&\text{ if } k=2m\text{ and }0\leq m\leq 2n-6,\\H^1(C,\bC)&\text{ if }k=2m+1\text{ and }n-3\leq m\leq 2n-6,\\\bC^{n-3}\oplus\wedge^2\left(H^1(C,\bC)\right)&\text{ if }k=4n-10.\end{array}\right.
\eeqn
\end{example}

\appendix

\section{The functions \texorpdfstring{$h_m^{(n)}(q)$}{lg}}
\label{appendix}
\begin{center}
{\em by Dennis Stanton\footnote{School of Mathematics, University of Minnesota, USA. E-mail: \texttt{stant001@umn.edu}.}}
\end{center}

The main results for $h_m^{(n)}(q)$ are given in 
Theorem~\ref{mainformula} and Corollary~\ref{maincor}. 

\begin{defn}\label{defn-a} 
For non-negative integers $n$ and $k$  let
$$
(A;q)_n=\prod_{k=0}^{n-1}(1-Aq^k), \quad 
\qbin{n}{k}{q}= \frac{(q^n;q^{-1})_k}{(q;q)_k}.
$$
\end{defn}

Note that the $q$-binomial coefficient $\qbin{n}{k}{q}$ is known to be 
a polynomial in $q$ of degree 
$k(n-k)$ with non-negative coefficients, see \cite[Theorem 3.2, p. 35]{And2}.

\begin{defn} For $0\le k\le n-1$ let
$$
og_{k,2n}(q)=\frac{(q^{4(n-k)};q^4)_k}{(q^2;q^2)_k}
\frac{1-q^{2n}}{1-q^{2(n-k)}}.
$$
\end{defn}

\begin{defn} 
\label{defn3}
For $n\ge 1$ and $0\le j\le n-1$ let $h_j^{(n)}(q)$
for $0\le j\le n-1$ be defined recursively by the $n$ equations 
$$
og_{k,2n}(q)=\sum_{j=0}^k q^{(k-j)(k-j+1)}h_j^{(n)}(q) \qbin{2n-1-k-j}{k-j}{q^2},
\quad 0\le k\le n-1.
$$
\end{defn}

An explicit formula for $h_m^{(n)}(q)$ is the main result. 
The proof is given at the end of the Appendix. 

\begin{theorem} 
\label{mainformula}
If $0\le m\le n-1,$ then
$$
\begin{aligned}
h_m^{(n)}(q)= & \sum_{k=0}^{[m/2]} (-q^{2n-2};q^{-2})_{m-2k}
\qbin{n-m+2k}{k}{q^4}
\frac{1-q^{4(n-m)}}{1-q^{4(n-m+2k)}}\,q^{4k}
\\
=&
\sum_{k=0}^{[m/2]} (-q^{2n-2};q^{-2})_{m-2k}
\left( \qbin{n-m+2k-1}{k}{q^4}-\qbin{n-m+2k-1}{k-1}{q^4}\right),
\end{aligned}
$$
where 
$$
\qbin{n-m-1}{-1}{q^4}=0.
$$
\end{theorem}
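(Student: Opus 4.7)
The plan is to verify Theorem~\ref{mainformula} by direct substitution into the triangular recursion of Definition~\ref{defn3}. Since that system is unitriangular (the coefficient of $h_k^{(n)}(q)$ in the $k$-th equation is $\qbin{2n-1-2k}{0}{q^2}=1$), it has a unique solution, so it suffices to check that the proposed closed form satisfies every equation for $k\in[0,n-1]$.

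Before substituting, I would first verify that the two displayed formulas for $h_m^{(n)}(q)$ agree. Setting $N=n-m+2k$ and $p=q^4$, this reduces to the elementary identity
\[
p^k\,\qbin{N}{k}{p}\,\frac{1-p^{N-2k}}{1-p^N}=\qbin{N-1}{k}{p}-\qbin{N-1}{k-1}{p},
\]
which follows from the telescoping $(1-p^{N-k})-(1-p^k)=p^k(1-p^{N-2k})$ together with the definition of the $q$-binomial coefficient.

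Next, I would substitute the first form of $h_j^{(n)}(q)$ into the right-hand side of the $k$-th equation of Definition~\ref{defn3}, interchange the summations, and change variables by setting $j=i+2\ell$, where $i$ indexes the $(-q^{2n-2};q^{-2})$-product and $\ell$ the $q^4$-binomial. Pulling the factors $(-q^{2n-2};q^{-2})_i$ out of the inner sum (they depend only on $i$), the right-hand side becomes
\[
\sum_{i\ge 0}(-q^{2n-2};q^{-2})_i\sum_{\ell\ge 0}\qbin{n-i}{\ell}{q^4}\frac{1-q^{4(n-i-2\ell)}}{1-q^{4(n-i)}}\,q^{4\ell}\,q^{(k-i-2\ell)(k-i-2\ell+1)}\qbin{2n-1-k-i-2\ell}{k-i-2\ell}{q^2}.
\]
The task is then to evaluate the inner $\ell$-sum in closed form. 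Because a $q^4$-binomial multiplies a $q^2$-binomial together with a quadratic prefactor $q^{(k-i-2\ell)(k-i-2\ell+1)}$, the natural tool is a suitable specialization of the terminating $q$-Chu--Vandermonde summation (equivalently, a ${}_2\phi_1$ or ${}_3\phi_2$ at base $q^2$ after a $q^4$-reparametrization). One expects the inner $\ell$-sum to collapse to a single $q^2$-binomial, after which a second application of $q$-Chu--Vandermonde (or $q$-Pfaff--Saalschütz) to the outer sum over $i$ reassembles the explicit product
\[
og_{k,2n}(q)=\frac{(q^{4(n-k)};q^4)_k(1-q^{2n})}{(q^2;q^2)_k(1-q^{2(n-k)})},
\]
completing the verification.

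The principal obstacle will be pinning down the exact $q$-hypergeometric identity that evaluates the inner $\ell$-sum: the mixture of base-$q^2$ and base-$q^4$ $q$-binomials, together with the quadratic exponent $q^{(k-i-2\ell)(k-i-2\ell+1)}$ and the rational factor $(1-q^{4(n-i-2\ell)})/(1-q^{4(n-i)})$, makes the parameter balance delicate, and one cannot expect a one-base, purely elementary manipulation to work. The \emph{doubling} of exponents flagged by the authors in the remark after Theorem~\ref{mainformula} is precisely what forces the $q^2$-versus-$q^4$ tension in the sum, so locating the correct classical $q$-series identity (and the exact substitution that makes its parameters match) is where the real content of the proof resides; once that identity is identified, the remaining manipulations should be essentially bookkeeping.
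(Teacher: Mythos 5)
Your strategy --- substitute the claimed closed form into the unitriangular system of Definition~\ref{defn3} and check each equation --- is legitimate in principle (the diagonal coefficient is indeed $1$, so the system has a unique solution), and your verification that the two displayed forms of $h_m^{(n)}(q)$ agree is correct: with $N=n-m+2k$, $p=q^4$ one has $\qbin{N-1}{k}{p}-\qbin{N-1}{k-1}{p}=\frac{(p;p)_{N-1}}{(p;p)_k(p;p)_{N-k}}\bigl(p^k-p^{N-k}\bigr)=p^k\qbin{N}{k}{p}\frac{1-p^{N-2k}}{1-p^N}$. But what you have written is an outline, not a proof: the step you yourself call ``where the real content of the proof resides'' --- evaluating the inner $\ell$-sum and then the outer $i$-sum to recover $og_{k,2n}(q)$ --- is left undone, and that is precisely the identity the theorem is asserting.

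Beyond incompleteness, your guess at the mechanism is likely too optimistic. Expanding $(k-i-2\ell)(k-i-2\ell+1)$ in $\ell$ produces a $q^{4\ell^2}$ factor in the inner sum, so this is not a balanced $q$-Chu--Vandermonde or a garden-variety terminating ${}_2\phi_1$/${}_3\phi_2$ evaluation; a quadratic exponent in the summation index together with the base-$q^2$ versus base-$q^4$ mismatch is the signature of a quadratic transformation of basic hypergeometric series, not of a summation theorem. Indeed, the paper's proof (Stanton's appendix) goes in the opposite direction: it first \emph{solves} the unitriangular system by the Gessel--Stanton matrix inverse (Proposition~\ref{matrixinv}), yielding the alternating sum in Proposition~\ref{firstprop}, and then converts that alternating sum into the manifestly positive one by establishing and applying a quadratic transformation, Proposition~\ref{q-quadtrans} and its Corollary~\ref{q-Bailey}, a $q$-analogue of a result of Bailey; the final form of Theorem~\ref{mainformula} is then an elementary rewriting of Theorem~\ref{firstpositive}. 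If your substitution route can be pushed through at all, it will require at least that much quadratic $q$-hypergeometric machinery, so identifying the exact identity is the genuine gap in your proposal, not bookkeeping you can safely defer.
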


\begin{rmk} Note that Theorem~\ref{mainformula} implies the recurrence
$$
h_m^{(n)}(q)=
\begin{cases}
(1+q^{2n-2})h_{m-1}^{(n-1)}(q), \quad {\text{if $m\ge 1$ is odd}},\\
(1+q^{2n-2})h_{m-1}^{(n-1)}(q)+\qbin{n-1}{m/2}{q^4}-\qbin{n-1}{m/2-1}{q^4},
\quad {\text{if $m\ge 2$ is even}}.
\end{cases}
$$
\end{rmk}

The polynomiality and positivity of $h_m^{(n)}(q)$ follows 
from Theorem~\ref{mainformula}.

\begin{cor}
\label{maincor}
If $0\le m\le n-1,$ then $h_m^{(n)}(q)$ is a polynomial in $q$ of degree 
$m(2n-m-1)$ with non-negative integer coefficients.
\end{cor}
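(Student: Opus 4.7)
The plan is to deduce all three claims of the corollary directly from the explicit formula established in Theorem~\ref{mainformula}:
\[
h_m^{(n)}(q) \;=\; \sum_{k=0}^{[m/2]} (-q^{2n-2};q^{-2})_{m-2k}\Bigl(\qbin{n-m+2k-1}{k}{q^4} - \qbin{n-m+2k-1}{k-1}{q^4}\Bigr).
\]
I would verify polynomiality, integrality, and the precise degree of $h_m^{(n)}(q)$ summand by summand, isolating the $k=0$ contribution to pin down the top degree; the sole delicate point is then the non-negativity of the coefficients.

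Polynomiality and integrality are immediate once one unwinds the definitions: the $q$-shifted factorial $(-q^{2n-2};q^{-2})_{m-2k} = \prod_{j=0}^{m-2k-1}(1+q^{2n-2-2j})$ is a monic polynomial in $\bZ_{\geq 0}[q]$, and each Gaussian binomial $\qbin{N}{k}{q^4}$ is a classical polynomial with non-negative integer coefficients, so each summand lies in $\bZ[q]$. For the degree I would compute that the $q$-shifted factorial has degree $(m-2k)(2n-m+2k-1)$, while the first Gaussian binomial has degree $4k(n-m+k-1)$, strictly greater than the degree $4(k-1)(n-m+k)$ of the subtracted one; the difference therefore has the same top degree as the first binomial, and adding these contributions gives that the $k$-th summand has degree $m(2n-m-1)-2k$, which is strictly decreasing in $k$. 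At $k=0$, using $\qbin{n-m-1}{0}{q^4}=1$ and $\qbin{n-m-1}{-1}{q^4}=0$, the summand collapses to the monic polynomial $(-q^{2n-2};q^{-2})_m$ of degree $m(2n-m-1)$, giving the asserted degree with leading coefficient $1$.

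The main obstacle is coefficient-wise non-negativity, which reduces to showing that $\qbin{N}{k}{q^4} - \qbin{N}{k-1}{q^4}$ has non-negative coefficients for $N = n-m+2k-1$. This is a classical strong-unimodality property of Gaussian polynomials, valid whenever $N \geq 2k$, and the hypothesis $m \leq n-1$ gives exactly $N - 2k = n-m-1 \geq 0$. If a self-contained argument is preferred in place of quoting the classical fact (for instance, O'Hara's chain decomposition of the Young diagram poset), one can instead construct an explicit injection from partitions fitting in a $(k-1)\times(N-k+1)$ rectangle into partitions fitting in a $k\times(N-k)$ rectangle using the slack $N-2k \geq 0$. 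This is the substantive part, because not every difference of Gaussian polynomials is coefficient-wise non-negative; the proof hinges on matching the combinatorial threshold $N \geq 2k$ precisely with the standing assumption $m \leq n-1$. Once this step is in place, summing the non-negative contributions over $k$ completes the proof of positivity and hence of the corollary.
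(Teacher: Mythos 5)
Your argument is correct and follows exactly the same route as the paper's proof: expand $h_m^{(n)}(q)$ via Theorem~\ref{mainformula}, observe that the $k$-th summand is a product of the manifestly non-negative factor $(-q^{2n-2};q^{-2})_{m-2k}$ and the difference of Gaussian binomials, invoke the classical non-negativity of $\qbin{N}{k}{q}-\qbin{N}{k-1}{q}$ for $N\ge 2k$ (the paper cites Andrews and the fake-degree/major-index interpretation), and compute that the $k$-th summand has degree $m(2n-m-1)-2k$. Your added remark isolating the monic $k=0$ term to pin down the leading coefficient, and the sketch of an injection alternative to the Andrews citation, are fine but do not change the structure of the argument.
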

\begin{proof}
In fact we show that the $k^{th}$ term in the sum of Theorem~\ref{mainformula} 
is a non-negative polynomial in $q$ of degree $m(2n-m-1)-2k.$ 

The factor
$$
(-q^{2n-2};q^{-2})_{m-2k}=\prod_{j=0}^{m-2k-1}(1+q^{2n-2-2j})
$$
is a non-negative polynomial in $q$ of degree $(2n-2)(m-2k)-2\binom{m-2k}{2}.$ 

The factor 
$$
\qbin{n-m+2k-1}{k}{q^4}-\qbin{n-m+2k-1}{k-1}{q^4}
$$
is a non-negative polynomial in $q^4$ of degree $k(n-m+k-1).$ 
This difference is known to be non-negative, see \cite{And}. 
It is also the fake degree polynomial $f^{\lambda}(q^4),$ 
for a partition $\lambda=(n-m-1+k,k)$ with 2 rows. It may be written 
as the generating function for the major index of standard Young tableaux of shape 
$\lambda$, see \cite[Corollary 7.21.5, p. 376]{St}. 

Since 
$$
(2n-2)(m-2k)-2\binom{m-2k}{2}+4k(n-m+k-1)=m(2n-m-1)-2k,
$$
the $k^{th}$ term has degree $m(2n-m-1)-2k$ and is non-negative. 
\end{proof}

The proof of Theorem~\ref{mainformula} is in two steps. First, 
an explicit formula for $h_m^{(n)}(q)$ is found by inverting the matrix 
in Definition~\ref{defn3}, see Proposition~\ref{firstprop}. Next a 
basic hypergeometric transformation, Proposition~\ref{q-Bailey},
is applied to obtain Theorem~\ref{mainformula}.

First we have a matrix inverse result which is \cite[Theorem 3.2]{GS}.

\begin{prop} 
\label{matrixinv} 
Suppose that
$$
\beta_k=\sum_{j=0}^k \frac{\alpha_j}{(Q;Q)_{k-j} (AQ;Q)_{k+j}}, \quad 0\le k\le n-1.
$$
Then for $0\le m\le n-1$,
$$
\alpha_m=\sum_{k=0}^m (-1)^{m+k}\beta_k \frac{(AQ;Q)_{m+k-1}}{(Q;Q)_{m-k}} (1-AQ^{2m}) 
Q^{\binom{m-k}{2}}.
$$
\end{prop}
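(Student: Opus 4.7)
The proposition asserts that two lower-triangular matrices are mutual inverses. Writing $F=(f_{kj})$ and $G=(g_{mk})$ with
\[
f_{kj} = \frac{1}{(Q;Q)_{k-j}(AQ;Q)_{k+j}} \text{ for } j \le k, \qquad g_{mk} = \frac{(-1)^{m+k}(AQ;Q)_{m+k-1}(1-AQ^{2m})Q^{\binom{m-k}{2}}}{(Q;Q)_{m-k}} \text{ for } k \le m,
\]
and zero above the diagonal, the hypothesis reads $\beta = F\alpha$ and the desired conclusion is $\alpha = G\beta$. My plan is thus to prove $GF = I$, which reduces to verifying
\[
\sum_{k=j}^{m} g_{mk}\,f_{kj} = \delta_{mj}, \qquad 0 \le j \le m \le n-1.
\]
The diagonal case $m = j$ is trivial: only the term $k = m$ survives and the ratio $(AQ;Q)_{2m-1}(1-AQ^{2m})/(AQ;Q)_{2m}$ equals $1$.

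For the off-diagonal case $m > j$, I set $N = m - j$ and reindex by $l = k - j$. Using the factorisation $(AQ;Q)_{m+k-1} = (AQ;Q)_{2j+l}(AQ^{2j+l+1};Q)_{N-1}$, the first factor cancels against $(AQ;Q)_{k+j}$ in the denominator, the sign reduces via $(-1)^{m+k} = (-1)^{N-l}$, and the two $(Q;Q)$-factors combine through $1/((Q;Q)_l(Q;Q)_{N-l}) = \qbin{N}{l}{Q}/(Q;Q)_N$. The orthogonality sum then collapses to
\[
(GF)_{mj} = \frac{1 - AQ^{2m}}{(Q;Q)_N}\,\sum_{l=0}^{N}(-1)^{N-l}\qbin{N}{l}{Q}\,Q^{\binom{N-l}{2}}\,(AQ^{2j+l+1};Q)_{N-1}.
\]
The crux is to show that the inner sum vanishes whenever $N \ge 1$.

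The key observation is that the functional
\[
\Delta^N(\varphi) \;:=\; \sum_{l=0}^{N}(-1)^{N-l}\qbin{N}{l}{Q}\,Q^{\binom{N-l}{2}}\,\varphi(l)
\]
annihilates every polynomial in $Q^l$ of degree strictly less than $N$. Indeed, reindexing by $p = N - l$ and invoking the $q$-binomial theorem $\sum_{p=0}^{N}\qbin{N}{p}{Q}(-x)^{p}Q^{\binom{p}{2}} = (x;Q)_N$ yields $\Delta^N(Q^{il}) = Q^{iN}(Q^{-i};Q)_N$, which vanishes for $0 \le i \le N-1$ because $(Q^{-i};Q)_N$ contains the factor $1 - Q^{0}$. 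Since $(AQ^{2j+l+1};Q)_{N-1} = \prod_{s=0}^{N-2}(1 - AQ^{2j+1+s}Q^{l})$ is manifestly a polynomial in $Q^{l}$ of degree $N-1 < N$, it lies in $\ker\Delta^N$, and the inner sum vanishes as required.

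The main obstacle is the algebraic reduction to the $\Delta^N$ form: one must cancel $q$-Pochhammer symbols and track signs and $q$-powers carefully, paying particular attention to the factorisation of $(AQ;Q)_{m+k-1}$ and to how the weight $Q^{\binom{m-k}{2}}$ transforms under $k = j+l$. Once this form is reached, the vanishing is a one-line consequence of the $q$-binomial theorem. A less transparent alternative is to reindex $l \mapsto N - l$ and recognise the inner sum as a terminating ${}_{2}\phi_{1}$ series summed by the $q$-Chu--Vandermonde formula, but the $q$-finite-difference argument above is the most direct.
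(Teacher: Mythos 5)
Your proof is correct, and it is worth noting that the paper itself does not prove this proposition at all: it is quoted verbatim as Theorem 3.2 of Gessel--Stanton [GS], where it is obtained as a consequence of their $q$-Lagrange inversion machinery. Your argument is a direct, self-contained verification that the two triangular matrices are inverse to each other: after the cancellation $(AQ;Q)_{m+k-1}/(AQ;Q)_{k+j}=(AQ^{2j+l+1};Q)_{N-1}$ (with $N=m-j$, $l=k-j$), the off-diagonal entry of $GF$ becomes $\frac{1-AQ^{2m}}{(Q;Q)_N}\sum_{l=0}^N(-1)^{N-l}\qbin{N}{l}{Q}Q^{\binom{N-l}{2}}(AQ^{2j+l+1};Q)_{N-1}$, and since $\Delta^N(Q^{il})=Q^{iN}(Q^{-i};Q)_N=0$ for $0\le i\le N-1$ by Gauss's terminating $q$-binomial theorem, the functional $\Delta^N$ kills the degree-$(N-1)$ polynomial $(AQ^{2j+l+1};Q)_{N-1}$ in $Q^l$; the diagonal entries are $1$, so $\alpha=G\beta$ follows from $\beta=F\alpha$. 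All the bookkeeping (sign, $Q^{\binom{m-k}{2}}$, Pochhammer splitting, Gaussian binomial recombination) checks out. What your route buys is elementarity and transparency — orthogonality of an explicit Bressoud-type inverse pair proved by a one-line $q$-difference/$q$-Chu--Vandermonde argument — whereas the citation to [GS] situates the identity inside a much more general inversion framework. One cosmetic point: for $m=k=0$ the entry $g_{00}$ involves $(AQ;Q)_{-1}$, which the paper's Definition \ref{defn-a} does not cover; you should state the standard convention $(AQ;Q)_{-1}=1/(1-A)$ (already implicit in the paper's own use of $(q^{4n-2};q^{-2})_{m-1}$ at $m=0$) so that your diagonal computation literally reads $g_{00}f_{00}=1$.
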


We apply Proposition~\ref{matrixinv} to Definition~\ref{defn3}.  
Fix $n\ge 1.$ Rewrite Definition~\ref{defn3} as
$$
\frac{(-1)^k og_{k,2n}(q)}{(q^{4n-2};q^{-2})_{2k} }=
\sum_{j=0}^k \frac{(-1)^j h_j^{(n)}(q)}{ (q^{-2};q^{-2})_{k-j} (q^{4n-2};q^{-2})_{k+j}},
\ 0\le k\le n-1.
$$
So if  
$$
Q=q^{-2},  \quad A=q^{4n},
\quad \beta_k=\frac{(-1)^k  og_{k,2n}(q)}{(q^{4n-2};q^{-2})_{2k}}, \quad
\alpha_j =(-1)^j h_j^{(n)}(q)
$$
we can apply Proposition~\ref{matrixinv} to solve for $h_m^{(n)}(q).$

\begin{prop} 
\label{firstprop}
If $0\le m\le n-1$, then
$$
\begin{aligned}
h_m^{(n)}(q)&=  \sum_{k=0}^m \frac{og_{k,2n}(q)}{(q^{4n-2};q^{-2})_{2k}}
\frac{(q^{4n-2};q^{-2})_{m+k-1}}{(q^{-2};q^{-2})_{m-k}} (1-q^{4n-4m})q^{-2\binom{m-k}{2}}
\\
&=
C_m
\sum_{k=0}^m 
\frac{(q^{2m};q^{-2})_k}{(q^{-2};q^{-2})_k}
\frac{(q^{4n-2m};q^{-2})_k}{(q^{4n-2};q^{-4})_k}
\frac{(1-q^{2n})}{(1-q^{2(n-k)})}
q^{-k^2-3k}.
\end{aligned}
$$
where
$$
C_m=\frac{(q^{4n-2};q^{-2})_{m-1}}{(q^2;q^2)_m}(1-q^{4n-4m})q^{2m}(-1)^m.
$$
\end{prop}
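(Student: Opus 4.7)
The plan is a two-step routine: apply Proposition~\ref{matrixinv} after verifying the matrix form set up in the lead-in text, and then rearrange the resulting sum into the displayed hypergeometric form via elementary $q$-Pochhammer algebra.

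First, I would verify the rewriting of Definition~\ref{defn3} as announced just before the statement. The key identity is
\[
\qbin{2n-1-k-j}{k-j}{q^2} = \frac{(q^{4n-2};q^{-2})_{2k}}{(q^{4n-2};q^{-2})_{k+j}\,(q^2;q^2)_{k-j}},
\]
which follows directly from Definition~\ref{defn-a} after observing that $(q^{4n-2-2k-2j};q^{-2})_{k-j} = (q^{4n-2};q^{-2})_{2k}/(q^{4n-2};q^{-2})_{k+j}$. Combined with the sign reversal
\[
(q^{-2};q^{-2})_{k-j} = (-1)^{k-j} q^{-(k-j)(k-j+1)} (q^2;q^2)_{k-j},
\]
substitution into Definition~\ref{defn3} produces the matrix form displayed in the text. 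Reading off the data $Q = q^{-2}$, $A = q^{4n}$, $\beta_k = (-1)^k og_{k,2n}(q)/(q^{4n-2};q^{-2})_{2k}$, and $\alpha_j = (-1)^j h_j^{(n)}(q)$, Proposition~\ref{matrixinv} then yields the first displayed equality of Proposition~\ref{firstprop} after the three sign factors $(-1)^m$, $(-1)^{m+k}$, and $(-1)^k$ combine to $+1$.

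Second, to pass to the hypergeometric form, I would substitute the explicit expression $og_{k,2n}(q) = \frac{(q^{4(n-k)};q^4)_k}{(q^2;q^2)_k}\,\frac{1-q^{2n}}{1-q^{2(n-k)}}$ and pull out every quantity independent of $k$ to form the constant $C_m$. The core identities are the splitting $(q^{4n-2};q^{-2})_{2k} = (q^{4n-2};q^{-4})_k (q^{4n-4};q^{-4})_k$ together with telescoping reversals of the form
\[
\frac{(q^{4n-2};q^{-2})_{m+k-1}}{(q^{4n-2};q^{-2})_{m-1}} = (q^{4n-2m};q^{-2})_k,
\]
and an analogous rewriting of $1/(q^{-2};q^{-2})_{m-k}$ that introduces $(q^{2m};q^{-2})_k$ up to an explicit $q$-power and sign. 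The factor $(q^{4(n-k)};q^4)_k$ from $og_{k,2n}(q)$ divided by $(q^{4n-4};q^{-4})_k$ collapses, contributing only the factor $(1-q^{2n})/(1-q^{2(n-k)})$ visible in the target expression.

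The main obstacle is purely bookkeeping: the exponent $q^{-k^2-3k}$ in the second formula must be reassembled from $q^{-2\binom{m-k}{2}}$ in the first together with the $q$-power shifts picked up during each Pochhammer reversal, and the residual signs must combine against the $(-1)^m$ prefactor absorbed into $C_m$. No identity beyond Proposition~\ref{matrixinv} is needed; the entire argument is a routine, if careful, computation with $q$-Pochhammer symbols, after which one could proceed to Theorem~\ref{mainformula} by applying the promised basic hypergeometric transformation.
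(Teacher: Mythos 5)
Your proposal is correct and follows essentially the same route as the paper: rewrite Definition~\ref{defn3} in the matrix form required by Proposition~\ref{matrixinv} using the substitutions $Q=q^{-2}$, $A=q^{4n}$, $\beta_k=(-1)^k og_{k,2n}(q)/(q^{4n-2};q^{-2})_{2k}$, $\alpha_j=(-1)^j h_j^{(n)}(q)$, obtain the first displayed formula, and then pass to the second by routine $q$-Pochhammer manipulations (splitting $(q^{4n-2};q^{-2})_{2k}$ into the two $q^{-4}$-Pochhammers, telescoping, and collecting the $k$-independent pieces into $C_m$). The paper presents this more tersely, but the argument and the key steps are the same.
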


For the final step in the proof of Theorem~\ref{mainformula}, 
we will need a transformation, Corollary~\ref{q-Bailey}. The next result is used to prove Corollary~\ref{q-Bailey},
and is a quadratic transformation of a basic hypergeometric series. 
\begin{prop}
\label{q-quadtrans}
As formal power series in $x$,
$$
\sum_{n=0}^\infty \frac{(D^2;q)_n}{(q;q)_n}
\frac{(R/q;q^2)_n}{(R/q;q)_n} x^n=
\sum_{k=0}^\infty 
\frac{(D^2;q)_{2k}}{(q^2;q^2)_k (R;q^2)_k}  
\frac{(D^2xq^{2k};q)_\infty}{(x;q)_\infty} q^{k(2k-2)} R^kx^{2k}.
$$
\end{prop}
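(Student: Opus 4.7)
The plan is to reduce the proposition to a $D$-free terminating $q$-series identity by expanding both sides as formal power series in $x$ and comparing coefficients.

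First I would attack the right-hand side using the $q$-binomial theorem
\[
\frac{(D^2 x q^{2k};q)_\infty}{(x;q)_\infty} \;=\; \sum_{m=0}^\infty \frac{(D^2 q^{2k};q)_m}{(q;q)_m}\, x^m,
\]
interchange the order of summation, substitute $n = 2k + m$, and apply the factorization $(D^2;q)_{2k}\,(D^2 q^{2k};q)_{n-2k} = (D^2;q)_n$. This rewrites the right-hand side as
\[
\sum_{n=0}^\infty (D^2;q)_n\, x^n \sum_{k=0}^{\lfloor n/2 \rfloor} \frac{q^{2k(k-1)} R^k}{(q^2;q^2)_k\,(R;q^2)_k\,(q;q)_{n-2k}}.
\]
Equating coefficients of $x^n$ with the left-hand side and cancelling the common factor $(D^2;q)_n$, the proposition reduces to the $D$-free claim
\[
\frac{(R/q;q^2)_n}{(R/q;q)_n} \;=\; (q;q)_n \sum_{k=0}^{\lfloor n/2 \rfloor} \frac{q^{2k(k-1)} R^k}{(q^2;q^2)_k\,(R;q^2)_k\,(q;q)_{n-2k}}.
\]

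Second, I would establish this terminating identity as a rational function of $R$. Splitting the Pochhammer symbol $(R/q;q)_n$ into its even and odd-indexed subproducts yields
\[
(R/q;q)_{2m} = (R/q;q^2)_m\,(R;q^2)_m, \qquad (R/q;q)_{2m+1} = (R/q;q^2)_{m+1}\,(R;q^2)_m,
\]
which collapses the left-hand side to a clean ratio — for instance $(Rq^{2m-1};q^2)_m / (R;q^2)_m$ when $n = 2m$. Both sides, after clearing by $(R;q^2)_{\lfloor n/2 \rfloor}$, are polynomials in $R$ of the same degree whose only possible poles lie on the $q^2$-lattice $R = q^{-2j}$, so one can close the argument either by checking the residues at each such pole (there is a clean cancellation inside the sum at $R = q^{-2j}$) or by recognizing the sum as a terminating quadratic basic hypergeometric series in $R$ which reduces to a $q$-Chu–Vandermonde (or a Bailey–Sears type) summation after a change of parameters. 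A third route, perhaps the most elementary, is induction on $n$: both sides satisfy the same first-order recurrence in $R$ and agree at $R = 0$.

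The main obstacle is the second step. The half-shift between bases $q$ and $q^2$ combined with the $R/q$ versus $R$ offset means the identity resembles but is not literally one of the named quadratic transformations, so some bookkeeping is required to match a standard summation. I expect the residue comparison (or equivalently induction on $n$ through a recurrence in $R$) to be cleaner than trying to pattern-match to a particular classical formula; the first reduction via the $q$-binomial theorem is essentially routine.
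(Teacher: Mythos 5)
Your first step---expanding $\frac{(D^2xq^{2k};q)_\infty}{(x;q)_\infty}$ by the $q$-binomial theorem, equating coefficients of $x^n$, and cancelling $(D^2;q)_n$ to arrive at the $D$-free terminating identity
\[
\frac{(R/q;q^2)_n}{(R/q;q)_n} \;=\; (q;q)_n\sum_{k=0}^{\lfloor n/2 \rfloor} \frac{q^{2k(k-1)} R^k}{(q^2;q^2)_k\,(R;q^2)_k\,(q;q)_{n-2k}}
\]
---is exactly the paper's argument, and your even/odd splitting of $(R/q;q)_n$ correctly simplifies the left side. The gap is that you never actually prove this identity. You list three candidate routes (residues, a classical quadratic summation, induction) and do not carry any of them through; the "bookkeeping" you defer is precisely the decisive step.

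The route the paper uses is your second option, and it does work, but the pattern-match is not free: one must change base. Writing $\frac{(q;q)_n}{(q;q)_{n-2k}}=(q^n;q^{-1})_{2k}$, factoring this as $(q^n;q^{-2})_k(q^{n-1};q^{-2})_k$, and converting $(q^2;q^2)_k$ and $(R;q^2)_k$ to base $q^{-2}$ Pochhammers turns the summand into $\frac{(q^n;q^{-2})_k(q^{n-1};q^{-2})_k}{(q^{-2};q^{-2})_k(1/R;q^{-2})_k}\,q^{-2k}$, a terminating ${}_2\phi_1$ in base $q^{-2}$ that $q$-Vandermonde (Gasper--Rahman (II.6)) evaluates. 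Your preferred alternatives are not clearly cleaner: the residue computation at $R=q^{-2j}$ requires summing contributions from every term with $k>j$, and the asserted "clean cancellation" is exactly what needs proof; and "a first-order recurrence in $R$" is not literally meaningful for a rational function of a continuous parameter---what you would need is a contiguous relation, which carries essentially the same $q$-hypergeometric content as the Vandermonde summation. In short, the reduction is correct and you have named the right classical tool, but the base change and the resulting summation are not done, so the proof is incomplete.
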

\begin{proof} We find the coefficient of $x^n$ on the right side,  
and show that it equals the coefficient of $x^n$ on the left side. 
Use the $q$-binomial theorem \cite[Theorem 2.1, p. 17]{And2} to expand
$$
\frac{(D^2xq^{2k};q)_\infty}{(x;q)_\infty}=
\sum_{j=0}^\infty \frac{(D^2q^{2k};q)_j}{(q;q)_j}x^j.
$$ 
So the coefficient of $x^n$ on the right side is
$$
\begin{aligned}
\sum_{k=0}^{[n/2]} &
\frac{(D^2;q)_{2k}}{(q^2;q^2)_k (R;q^2)_k}  
q^{k(2k-2)} R^k\frac{(D^2q^{2k};q)_{n-2k}}{(q;q)_{n-2k}}\\
&= \frac{(D^2;q)_n}{(q;q)_n}
\sum_{k=0}^{[n/2]} \frac{(q^n;q^{-1})_{2k}}{(q^2;q^2)_k (R;q^2)_k}q^{k(2k-2)} R^k\\
&= \frac{(D^2;q)_n}{(q;q)_n}
\sum_{k=0}^{[n/2]} \frac{(q^n;q^{-2})_{k}(q^{n-1};q^{-2})_{k}}{(q^{-2};q^{-2})_k (1/R;q^{-2})_k}q^{-2k}\\
&=  \frac{(D^2;q)_n}{(q;q)_n}
\frac{(R/q;q^2)_n}{(R/q;q)_n},
\end{aligned}
$$
where we have used the $q$-Vandermonde theorem 
\cite[(II.6), p. 354]{GR}, to evaluate the last sum.
\end{proof}

The transformation we need is a corollary of 
Proposition~\ref{q-quadtrans}, and is a $q$-analogue of 
a result of Bailey~\cite[(5.41)]{Bai}.

\begin{cor}
\label{q-Bailey} 
If $m$ is a non-negative integer, then
$$
\begin{aligned}
\sum_{k=0}^{[m/2]} &
\frac{(D^2;q)_{2k}}{(q^2;q^2)_k (R;q^2)_k}  
q^{k(2k-2)} R^k 
\frac{(D^2q^{2k}/B;q)_{m-2k}}{(q;q)_{m-2k}}B^{m-2k}\\
=&
\sum_{s=0}^m \frac{(D^2;q)_{m-s}}{(q;q)_{m-s}}
\frac{(R/q;q^2)_{m-s}}{(R/q;q)_{m-s}}
\frac{(1/B;q)_{s}}{(q;q)_{s}}B^{s}\\
=&
\frac{(D^2;q)_{m}}{(q;q)_{m}}
\frac{(R/q;q^2)_{m}}{(R/q;q)_{m}}
\sum_{s=0}^m \frac{(q^{-m};q)_s}{(q^{1-m}/D^2;q)_s}
\frac{(q^{2-m}/R;q)_s}{(q^{3-2m}/R;q^2)_s}
\frac{(1/B;q)_{s}}{(q;q)_{s}}\left(\frac{Bq^{2-m}}{D^2}\right)^{s} q^{\binom{s}{2}}
\end{aligned}
$$
\end{cor}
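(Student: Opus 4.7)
My plan is to derive the corollary from Proposition~\ref{q-quadtrans} by multiplying both sides by a particular generating function and extracting a coefficient in $x$, and then to obtain the final hypergeometric form by elementary manipulations of $q$-Pochhammer symbols.

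\textbf{Step 1 (first equality).} I would multiply both sides of Proposition~\ref{q-quadtrans} by $(x;q)_\infty/(Bx;q)_\infty$ and extract the coefficient of $x^m$. By the $q$-binomial theorem,
$$
\frac{(x;q)_\infty}{(Bx;q)_\infty} \;=\; \sum_{s\ge 0} \frac{(1/B;q)_s}{(q;q)_s}\, B^s x^s ,
$$
so taking $[x^m]$ of the Cauchy product of this series with $\sum_n \tfrac{(D^2;q)_n}{(q;q)_n}\tfrac{(R/q;q^2)_n}{(R/q;q)_n}\, x^n$ (the left side of Proposition~\ref{q-quadtrans}) produces exactly the middle expression of the corollary. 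On the right side of Proposition~\ref{q-quadtrans}, the factor $(x;q)_\infty$ in the denominator cancels against the multiplier, leaving $(D^2 x q^{2k};q)_\infty/(Bx;q)_\infty$, which by a second application of the $q$-binomial theorem equals $\sum_j \tfrac{(D^2 q^{2k}/B;q)_j}{(q;q)_j}(Bx)^j$. Extracting the coefficient $[x^{m-2k}]$ from this factor and summing over $k$ yields precisely the left-hand side of the corollary.

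\textbf{Step 2 (hypergeometric form).} To convert the middle sum into the final form, I would factor out the constants $\tfrac{(D^2;q)_m}{(q;q)_m}\tfrac{(R/q;q^2)_m}{(R/q;q)_m}$ and rewrite each ratio $(a;q)_{m-s}/(a;q)_m = 1/(aq^{m-s};q)_s$ by means of the reversal identity
$$
(aq^{m-s};q)_s \;=\; (-a)^s\, q^{ms - \binom{s+1}{2}}\,(q^{1-m}/a;q)_s ,
$$
applied with $a=D^2$, $a=q$, and (in base $q^2$) $a=R/q$, together with $a=R/q$ in base $q$. The four reversed Pochhammer symbols are $(q^{-m};q)_s$, $(q^{1-m}/D^2;q)_s$, $(q^{2-m}/R;q)_s$, $(q^{3-2m}/R;q^2)_s$, matching exactly the positions in the final expression. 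Collecting the accumulated powers of $q$, $D^2$, $R$ and the four signs, together with the $B^s$ coming from the convolution, should produce the weight $(Bq^{2-m}/D^2)^s q^{\binom{s}{2}}$ as claimed.

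\textbf{Main obstacle.} The argument is essentially bookkeeping; the only delicate point is Step 2, where the four reversals each contribute a sign and a power of $q$ that must collapse exactly to the prefactor $(Bq^{2-m}/D^2)^s q^{\binom{s}{2}}$. I would compute each reversal separately, verify the crucial arithmetic identity $s - ms + \binom{s+1}{2} = (2-m)s + \binom{s}{2}$, and only then combine the four contributions, so as to avoid exponent slips. Once that algebra is in hand, both equalities of the corollary are immediate.
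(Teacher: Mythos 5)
Your proposal is correct and follows essentially the same route as the paper: multiply Proposition~\ref{q-quadtrans} by $(x;q)_\infty/(Bx;q)_\infty$, expand both sides with the $q$-binomial theorem, extract the coefficient of $x^m$, and then pass to the hypergeometric form via the Pochhammer reversal identity $(A;Q)_{m-s}=(A;Q)_m/\bigl((Q^{1-m}/A;Q)_s(-AQ^{m-1})^s Q^{-\binom{s}{2}}\bigr)$, which is the same identity you quote (in the equivalent form $(aq^{m-s};q)_s=(-a)^s q^{ms-\binom{s+1}{2}}(q^{1-m}/a;q)_s$).
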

 
\begin{proof} Multiply both sides of Proposition~\ref{q-quadtrans}
by $(x;q)_\infty/(Bx;q)_\infty$ and equate coefficients of $x^m$, using
$$
\frac{(x;q)_\infty}{(Bx;q)_\infty}=\sum_{j=0}^\infty \frac{(1/B;q)_j}{(q;q)_j}(Bx)^j,
\quad
\frac{(D^2xq^{2k};q)_\infty}{(Bx;q)_\infty}=
\sum_{j=0}^\infty \frac{(D^2q^{2k}/B;q)_j}{(q;q)_j}(Bx)^j.
$$
The last equality uses 
$$
(A;Q)_{m-s}=\frac{(A;Q)_m}{(Q^{1-m}/A;Q)_s \left(-AQ^{m-1}\right)^s Q^{-\binom{s}{2}}}.
$$
\end{proof}

We use Corollary~\ref{q-Bailey} to give another sum for $h_m^{(n)}(q).$

\begin{theorem} 
\label{firstpositive}
If $0\le m\le n-1,$
$$
h_m^{(n)}(q)=(-q^{2n-2};q^{-2})_m\sum_{k=0}^{[m/2]}
\frac{(q^{2n-2m};q^2)_{2k}}{(q^{4n+4-4m};q^4)_k (q^4;q^4)_k} q^{4k}
$$
\end{theorem}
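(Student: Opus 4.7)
The plan is to combine Proposition \ref{firstprop} (which expresses $h_m^{(n)}(q)$ as a sum of $m+1$ terms indexed by $k \in \{0, 1, \ldots, m\}$) with the basic hypergeometric transformation of Corollary \ref{q-Bailey} (which converts a sum with $m+1$ terms into one of only $[m/2]+1$ terms). The shape of Theorem \ref{firstpositive} matches exactly the latter.

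First I would rewrite the factor $(1-q^{2n})/(1-q^{2n-2k})$ in Proposition \ref{firstprop}'s sum as the $q$-Pochhammer ratio $(q^{2n};q^{-2})_k/(q^{2n-2};q^{-2})_k$. Then, substituting $q \to q^{-2}$ in Corollary \ref{q-Bailey} and choosing
\[
D^2 = q^{2m-2n}, \qquad R = q^{4m-4n-4}, \qquad B = q^{-2n},
\]
the four Pochhammer ratios on the right-hand side of Corollary \ref{q-Bailey} (third form) match the four Pochhammer ratios in Proposition \ref{firstprop}'s sum: $(q^{2m-2}/D^2;q^{-2})_s = (q^{2n-2};q^{-2})_s$, $(q^{2m-4}/R;q^{-2})_s = (q^{4n-2m};q^{-2})_s$, $(q^{4m-6}/R;q^{-4})_s = (q^{4n-2};q^{-4})_s$, and $(1/B;q^{-2})_s = (q^{2n};q^{-2})_s$. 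One checks that the weight $(Bq^{2m-4}/D^2)^s q^{-2\binom{s}{2}}$ collapses to $q^{-s^2-3s}$, matching Proposition \ref{firstprop}'s exponent exactly.

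Having identified Proposition \ref{firstprop}'s sum with the third form of Corollary \ref{q-Bailey}'s right-hand side, I would then use the left-hand side of Corollary \ref{q-Bailey} to rewrite $h_m^{(n)}(q)$ as a sum over $k \in \{0, \ldots, [m/2]\}$. In this left-hand side, the trailing factor $(D^2q^{-4k}/B;q^{-2})_{m-2k}/(q^{-2};q^{-2})_{m-2k}$ becomes $(q^{2m-4k};q^{-2})_{m-2k}/(q^{-2};q^{-2})_{m-2k}$, which telescopes to $(-1)^{m-2k}q^{(m-2k)(m-2k+1)}$. Applying the standard Pochhammer identity $(q^a;q^{-2})_n = (-1)^n q^{na-n(n-1)}(q^{-a};q^2)_n$ and its $q^{-4}$ analogue, one converts $(q^{2m-2n};q^{-2})_{2k}$, $(q^{-4};q^{-4})_k$, and $(q^{4m-4n-4};q^{-4})_k$ into $(q^{2n-2m};q^2)_{2k}$, $(q^4;q^4)_k$, and $(q^{4n+4-4m};q^4)_k$; a direct exponent computation shows that all signs and $q$-powers inside the summand collapse cleanly to $q^{4k}$.

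The final step is to recognize the full prefactor---the combination of $C_m$, Bailey's prefactor $(D^2;q^{-2})_m (Rq^2;q^{-4})_m/[(q^{-2};q^{-2})_m(Rq^2;q^{-2})_m]$, and the $q$-power corrections from the base changes---as $(-q^{2n-2};q^{-2})_m$. This rests on the identity
\[
(q^{2n-2m};q^2)_m \cdot (-q^{2n-2};q^{-2})_m = (q^{4n-4m};q^4)_m,
\]
which in turn follows from $(a;q)_m(-a;q)_m = (a^2;q^2)_m$ by pairing the $j$-th factor of the first product with the $(m-1-j)$-th factor of the second. The main obstacle is bookkeeping: tracking the $q$-exponents and signs when shifting between Pochhammer symbols with negative bases ($q^{-2}$, $q^{-4}$) and positive bases ($q^2$, $q^4$), and checking that the miraculous cancellation $m(m+1-2n) - 2k + \text{exponent corrections} = 4k$ actually holds. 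Once the parameter substitution above is in place, the identity reduces to a direct, if tedious, algebraic verification.
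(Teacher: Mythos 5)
Your proof is correct and follows exactly the same route as the paper: specialize Corollary~\ref{q-Bailey} by replacing $q$ with $q^{-2}$ and setting $D^2=q^{2m-2n}$, $R=q^{4m-4n-4}$, $B=q^{-2n}$, identify the third (inner) form with the sum from Proposition~\ref{firstprop}, and then read off the $[m/2]+1$-term sum from the left-hand side after routine base conversions. The only difference is that you spell out the Pochhammer bookkeeping (and isolate the auxiliary identity $(q^{2n-2m};q^2)_m\,(-q^{2n-2};q^{-2})_m=(q^{4n-4m};q^4)_m$) that the paper compresses into a single ``one verifies'' step.
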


\begin{proof}In 
Corollary~\ref{q-Bailey} we replace $q$ by $q^{-2}$, and let
$$
B=q^{-2n}, \quad R=q^{-4n+4m-4}, \quad D^2=q^{-2n+2m}.
$$
The final expression in 
Corollary~\ref{q-Bailey} is $$\frac{(q^{-2n+2m};q^{-2})_{m}}{(q^{-2};q^{-2})_{m}}
\frac{(q^{-4n+4m-2};q^{-4})_{m}}{(q^{-4n+4m-2};q^{-2})_{m}}\frac{h_m^{(n)}(q)}{C_m},$$ where $h_m^{(n)}(q)$ and $C_m$ are as in Proposition~\ref{firstprop}.  The left side of Corollary~\ref{q-Bailey} is $$(-1)^mq^{m^2-2nm+m}\sum_{k=0}^{[m/2]}
\frac{(q^{2n-2m};q^2)_{2k}}{(q^{4n+4-4m};q^4)_k (q^4;q^4)_k} q^{4k}.$$ 
One verifies that $$(-1)^mC_m\frac{q^{m^2-2nm+m}(q^{-2};q^{-2})_{m}}{(q^{-2n+2m};q^{-2})_{m}}
\frac{(q^{-4n+4m-2};q^{-2})_{m}}{(q^{-4n+4m-2};q^{-4})_{m}}=(-q^{2n-2};q^{-2})_m.$$
\end{proof}

\begin{proof}[Proof of Theorem~\ref{mainformula}] We rewrite 
the $k^{th}$ summand in Theorem~\ref{firstpositive}
$$
\begin{aligned}
(-q^{2n-2};q^{-2})_m & \frac{(q^{2n-2m};q^2)_{2k}}
{(q^{4n+4-4m};q^4)_k (q^4;q^4)_k} q^{4k}\\
&=
(-q^{2n-2};q^{-2})_{m-2k}
\frac{(-q^{2n-2m};q^2)_{2k}(q^{2n-2m};q^2)_{2k}}
{(q^{4n+4-4m};q^4)_k (q^4;q^4)_k} q^{4k} 
\\
&=
(-q^{2n-2};q^{-2})_{m-2k}
\frac{(q^{4n-4m};q^4)_{2k}}
{(q^{4n+4-4m};q^4)_k (q^4;q^4)_k} q^{4k} 
\\
&= (-q^{2n-2};q^{-2})_{m-2k}
\qbin{n-m+2k}{k}{q^4}
\frac{1-q^{4(n-m)}}{1-q^{4(n-m+2k)}}q^{4k}\\
&=
(-q^{2n-2};q^{-2})_{m-2k}
\left( \qbin{n-m+2k-1}{k}{q^4}-\qbin{n-m+2k-1}{k-1}{q^4}\right).
\end{aligned}
$$

\end{proof}

\end{document}